\newcommand{\bastb}{\bold{H}}
\newcommand{\op}[1]{\operatorname{\bold #1}}
\newcommand{\rpar}{u}
\newcommand{\xast}{x^{\ast}}
\newcommand{\expect}{\mathbb E}
\newcommand{\sa}{s_{\alpha}}
\newcommand{\xad}{x^\delta_\alpha}
\newcommand{\Cad}{C_\alpha^\delta}
\newcommand{\Vad}{V^\delta(\alpha)}
\newcommand{\N}{\mathcal N}
\newcommand{\lr}[1]{\left( #1 \right)}
\newcommand{\norm}[2]{\left\| #1 \right\|_{#2}}
\newcommand{\set}[1]{\left\{ #1 \right\}}
\newcommand{\scalar}[2]{\langle{ #1},{#2} \rangle}
\newcommand{\real}{\mathbb R}
\newcommand{\nat}{\mathbb N}
\newcommand{\af}{\mathcal S_\varphi}
\newcommand{\tr}[1]{\operatorname{tr}\left[#1\right]}
\newcommand{\range}{\mathcal R}
\newcommand{\z}{z^\delta}
\newcommand{\spc}{\mathrm{SPC}}
\newcommand{\prc}{\bold{C}}
\newcommand{\cN}{\mathcal N}
\newcommand{\xfg}[1]{X_{#1}^{\prc}}
\newcommand{\xfh}[1]{X_{#1}^{\bastb}}
\newcommand{\brf}{\bx(\alpha)}
\newcommand{\bx}{b_{\xast}}
\newcommand{\funf}{\sa}
\newcommand{\aast}{\alpha_{\ast}}
\newcommand{\bigo}{\mathcal O}
\newcommand{\peter}[1]{#1}
\newtheorem{lem}{Lemma}
\newtheorem{cor}{Corollary}
\newtheorem{prop}{Proposition}
\newtheorem{thm}{Theorem}
\theoremstyle{definition}
\newtheorem{Def}{Definition}
\newtheorem{ass}{Assumption}
\newtheorem{rem}{Remark}
\newtheorem{xmpl}{Example}
\numberwithin{equation}{section}
\title[Bayesian  inverse problems]{Bayesian inverse problems with
  non-commuting operators}  
\author{Peter Math\'e}
\email{peter.mathe@wias-berlin.de} 
\address{Weierstrass Institute, Mohrenstrasse 39, 10117 Berlin, Germany}
\begin{document}
\date{\today}
\begin{abstract}
  The Bayesian approach to ill-posed operator equations in Hilbert
  space recently
  gained attraction. In this context, and when the prior distribution
  is Gaussian, then two operators play a significant
  role, the one which governs the operator equation, and the one which
  describes the prior covariance. Typically it is assumed that these
  operators commute. Here we extend
  this analysis to non-commuting operators, replacing the
  commutativity assumption by a link condition. We discuss its
  relation to the commuting case, and we 
  indicate that this allows to use interpolation type results to
  obtain tight bounds for the contraction of the posterior Gaussian
  distribution towards the data generating element.
\end{abstract}
\maketitle

\section{Setup and Problem formulation}
\label{sec:setup}

We shall consider the equation
\begin{equation}
  \label{eq:base}
  y^{\delta} = \op K x + \delta\xi,
\end{equation}
where~$\delta>0$ prescribes a base noise level, and~$\op K\colon X\to Y$
is a compact linear operator between Hilbert spaces. The noise
element~$\xi$ is a weak random element.
If this random element~$\xi$ has covariance~$\op \Sigma$, then we
may pre-whiten Equation~(\ref{eq:base})  to get  
\begin{equation}
  \label{eq:z-model}
\z = \op \Sigma^{-1/2} \op K x + \delta\op \Sigma^{-1/2}\xi,  
\end{equation}
which is now a linear inverse problem under Gaussian white noise.

In the Bayesian framework we choose a prior for $x$. Since this is
assumed to be a tight and centered Gaussian
measure~$\cN(0,\frac{\delta^{2}}\alpha\prc)$, it is equipped with a (scaled)
covariance~$\prc$ which has a finite trace. As calculations show, the
relevant operator in the analysis will then be $\op B:= \op
\Sigma^{-1/2} \op K
\prc^{1/2}$, we refer to~\cite{AM2014} for details.

Therefore, we have (at least)  two operators to
consider, the prior covariance 
operator~$\prc$ as well as the operator~$\bastb := \op B^{\ast}\op B$.  Both
operators are 
non-negative compact  self-adjoint operators in $X$. To simplify the
analysis we shall assume that the operator~$\prc$ is
injective.

Within the present, very basic Bayesian context much is known,
and we refer to the recent survey~\cite{AM2014} and references therein. In
particular we know that the posterior is (tight) Gaussian, and we find the
following representation for the posterior mean and covariance for the
model from~(\ref{eq:z-model}):
\begin{align}\label{ali:pom-representation}
\xad& =  \prc^{1/2}\lr{\alpha \op I +\bastb}^{-1}\op B^{\ast}\z,\hfill\
      \text{(posterior mean) and}\\
\Cad& =  {\delta^{2}} \prc^{1/2} \lr{\alpha
      \op I+\bastb}^{-1}\prc^{1/2}\hfill\text{(posterior covariance).} 
                  \label{ali:cad-representation}
  \end{align}
In the study~\cite{AM2014} the authors highlight that the (square of
the) contraction of the 
posterior towards the element~$\xast$, generating the data~$\z$ is
driven by the \emph{squared posterior contraction} (SPC), given as
 \begin{equation}
    \label{eq:sqpostconc}
    \spc(\alpha,\delta):= \expect^{\xast}\expect^{\z}_{\alpha}\norm{\xast -
      x}{}^{2},\quad \alpha,\delta >0,
  \end{equation}
where the outward expectation is taken with respect to the data
generating distribution, that is, the distribution generating~$\z$
when $\xast$ is given, and the inward expectation is taken with
respect to the posterior distribution, given data~$\z$ and having
chosen a parameter~$\alpha$. Moreover, the~$\spc$ has a decomposition
  \begin{equation}
    \label{eq:base-decomposition}
    \spc(\alpha,\delta)= \bx^2(\alpha)+ \Vad + \tr{\Cad}. 
  \end{equation}
with the squared bias
$\bx^2(\alpha):= \norm{\xast - \expect^{\xast}\xad}{}^{2}$,  the estimation
variance~$\Vad:=\expect^{\xast}\norm{\xad-\expect^{\xast}\xad}{}^2$,
and the \emph{posterior spread}~$\tr{\Cad}$.
Proposition~1 ibid. asserts that the estimation variance~$\Vad$ is always
smaller than the posterior spread, thus we need to bound the
bias and posterior spread, only. Therefore we recall the form of the
bias from Lemma~1 ibid. as
\begin{equation}
  \label{eq:bias-form}
    b_{\xast}(\alpha) = \norm{\prc^{1/2} \sa(\bastb)
                        \prc^{-1/2}\xast}{},\quad \alpha >0, 
\end{equation}
where we abbreviate~$s_{\alpha}(\bastb) = \alpha\lr{\alpha\op I + \bastb}^{-1}$.
 Plainly, if $\prc$ and
$\bastb$ commute, then we have that
\begin{equation}
  \label{eq:brf-commute}
  \brf = \norm{\funf(\bastb)\xast}{}.
\end{equation}
 Also,
it is easily seen from the cyclic  commutativity of the trace that
\begin{equation}
  \label{eq:spread}
  \tr{\Cad} = {\delta^{2}} \tr{\lr{\alpha\op I + \bastb}^{-1}\prc}. 
\end{equation}
Here we aim at providing tight bounds, both for the bias and
the posterior spread for non-commuting operators~$\prc$
and~$\bastb$. For Bayesian inverse problems we are aware of only one
study~\cite{MR3084161}
.

Within the 'classical' theory of ill-posed problems this was
met earlier.
\peter{The typical situation arises, when smoothness is measured
in some (Sobolev) Hilbert scale, say e.g.~$H^{l}(\Omega)$,
where~$\Omega$ is some sufficiently smooth bounded domain, and~$l\in\real$ describes smoothness properties. 
The operator~$K$ is then linked to the scale by assuming that there is
some $\mu>0$ such that
$$
\norm{\op K x}{Y} \asymp \norm{x}{-l},\quad x\in H^{-l}(\Omega),
$$
i.e.,\ the operator~$\op K$ is smoothing with step~$l$; we highlight
such situation in~\S~\ref{sec:example}. }
Subsequently, in particular when measuring smoothness in a more general sense in
terms of variable Hilbert scales, such links were assumed in a more
general context. A comprehensive study is~\cite{MR2277542}, which
shows how interpolation in variable Hilbert scales can be used in
order to derive error bounds under such link conditions. The present
study may be seen as an application of these techniques to Bayesian
inverse problems.

We shall start in Section~\ref{sec:link-condition} with introducing the link
condition, discuss its implications, and we relate this to the commuting
case. In particular we emphasize that the assumptions made for
commuting operators yield a corresponding link condition.
Then we shall use the decomposition of the $\spc$ as
in~(\ref{eq:base-decomposition}), and thus find bounds for the bias in
Section~\ref{sec:bias-bound}, and bounds 
for the posterior spread in Section~\ref{sec:spread-bound},
respectively. We then summarize the results for giving bounds for the
squared posterior contraction in Section~\ref{sec:spc-bound}, and we
conclude with a discussion in Section~\ref{sec:conclusion}.

\section{Linking operators and scales of Hilbert spaces}
\label{sec:link-condition}

In order to introduce the fundamental link condition we need some
notions and auxiliary calculus.

\subsection{Link condition}
\label{sec:link}

We first recall the following concepts from~\cite{AM2014,MR2992966}.

\begin{Def}
  [index function] A function~$\psi\colon (0,\infty)\to \real^{+}$ is
  called an \emph{index function} if it is a continuous non-decreasing
function with $\psi(0)=0.$ 
\end{Def}
\begin{Def}[partial ordering for index functions]
   Given two index functions
$g, h$ we shall write $g\prec h$ \peter{($h$ is beyond~$g$)}, if the function~$t\mapsto h(t)/g(t)$ is
an index function ($h$ tends to zero faster than $g$). 
\end{Def}
The link condition which we are going to introduce now will be based
upon a partial ordering of self-adjoint operators, and we refer
to~\cite{MR1477662} for a comprehensive account. Although the
monograph formally treats matrices, only, most of the results transfer
to (bounded compact) operators in Hilbert space.
\begin{Def}
  [partial ordering for self-adjoint operators]
Let $\op G$ and $\op G^{\prime}$ be boun\-ded non-negative self-adjoint
operators in some Hilbert spaces $X$. We say that $\op G\leq \op G^{\prime}$ if for all
$x\in X$ the inequality $\scalar{\op G x}{x}\leq \scalar{\op G^{\prime}
  x}{x}$ holds true.
\end{Def}
The following concept of 'concavity' is the extension of concavity
from real functions to self-adjoint operators by functional calculus.
\begin{Def} Let $f\colon [0,a]\to \real^+$ be a continuous function.
 It is called {\em operator concave}\/ if for any pair $\op G,\op G^{\prime}\geq
    0$ of self-adjoint operators with spectra in $[0,a]$ we have
    \begin{equation}
      \label{eq:op-conc}
      f(\frac{\op G + \op G^{\prime}}{2})\geq \frac{f(\op G) + f(\op G^{\prime})}{2}.
    \end{equation}
\end{Def} 
We mention that operator concave function must be operator monotone,
i.e., if~$\op G\leq \op G^{\prime}$ then we will have that~$f(\op G)
\leq f(\op G^{\prime})$,
see~\cite[Thm.~V.2.5]{MR1477662}\footnote{Formally, operator monotone
  functions are defined on~$[0,\infty)$. The asserted monotonicity
can be seen from the proof of Theorem~V.2.5 ibid. }.
The concept of operator concavity will be crucial for the interpolation, below. However,
the above  partial ordering has also implications for the ranges of the
operators, and this is comprised in
\begin{thm}
[{Douglas' Range Inclusion
Theorem, see~\cite{MR0203464}}]\label{thm:douglas}
Let us onsider operators~$\op S\colon Y \to X$ and $\op T\colon Z \to X$, acting between
Hilbert spaces. The following assertions are equivalent.
\begin{enumerate}
\item $\range(\op S) \subset \range(\op T),$
\item there is a constant~$C$ such that~$\op S \op S^\ast \leq
  C^{2}\op T \op T^\ast$,
\item there is a constant~$C$ such that~$\norm{\op S^{\ast}x}{Y} \leq C
  \norm{\op T^{\ast}x}{Z}\ x\in X$, \label{enum:norm-ineq}
\item there is a 'factor'~$\op R\colon Y\to Z,\ \norm{\op R}{}\leq C$,
  such that~$\op S = \op T \op R$.
\end{enumerate}
\end{thm}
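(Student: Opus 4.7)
The plan is to prove the equivalences by establishing a short cycle together with the trivial equivalence between (2) and (3), rather than proving each pair separately. The cheapest observation first: since $\scalar{\op S\op S^\ast x}{x}=\norm{\op S^\ast x}{Y}^{2}$ and analogously for $\op T$, condition (2) is literally condition (3) after squaring, so I would treat (2) and (3) as a single statement.

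Next I would handle the easy implications. For $(4)\Rightarrow(1)$, if $\op S=\op T\op R$, then every $\op S y = \op T(\op R y)$ lies in $\range(\op T)$. For $(4)\Rightarrow(2)$, direct computation gives $\op S\op S^\ast=\op T\op R\op R^\ast \op T^\ast\leq \norm{\op R}{}^{2}\op T\op T^\ast$, using that $\op R\op R^\ast\leq \norm{\op R}{}^{2}\op I$ and that conjugation by $\op T$ preserves the operator order. For $(1)\Rightarrow(3)$ I would invoke the closed graph theorem: assuming injectivity on $(\ker\op T)^{\perp}$, define $\op R\colon Y\to Z$ by letting $\op R y$ be the unique preimage of $\op S y$ in $(\ker\op T)^{\perp}$, then check that the graph is closed using continuity of $\op S$ and $\op T$; boundedness of $\op R$ yields (4), and (3) is then a consequence through (4)$\Rightarrow$(2)$\Leftrightarrow$(3).

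The main work is the implication $(3)\Rightarrow(4)$, which I expect to be the real obstacle. The idea is to construct a factor operator on the natural domain first. On the subspace $\range(\op T^\ast)\subset Z$, define an auxiliary map $\op R_{0}\colon\range(\op T^\ast)\to Y$ by the rule $\op R_{0}(\op T^\ast x):=\op S^\ast x$. Condition (3) serves two purposes simultaneously: if $\op T^\ast x=\op T^\ast x'$, then $\norm{\op S^\ast (x-x')}{Y}\leq C\norm{\op T^\ast (x-x')}{Z}=0$, which guarantees well-definedness; and the same inequality shows $\norm{\op R_{0}}{}\leq C$. I would then extend $\op R_{0}$ continuously to the closure $\overline{\range(\op T^\ast)}$, and set it to zero on the orthogonal complement $(\overline{\range(\op T^\ast)})^{\perp}=\ker \op T$, obtaining a bounded operator $\op R_{0}\colon Z\to Y$ with $\op R_{0}\op T^\ast=\op S^\ast$.

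Finally, taking adjoints in the identity $\op R_{0}\op T^\ast=\op S^\ast$ yields $\op T\op R_{0}^\ast=\op S$, so that $\op R:=\op R_{0}^\ast\colon Y\to Z$ is the factor required by (4), with $\norm{\op R}{}=\norm{\op R_{0}}{}\leq C$. The delicate bookkeeping lies in keeping track of which space each operator acts on (note $\op T\colon Z\to X$, hence $\op T^\ast\colon X\to Z$), and in justifying the zero extension to $\ker\op T$; once that is done, the cycle closes and all four statements are equivalent.
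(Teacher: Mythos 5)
The paper states Douglas' theorem without proof, citing \cite{MR0203464}; there is thus no paper proof to compare against. Your argument is correct and is essentially Douglas' original one: the equivalence (2)$\Leftrightarrow$(3) by squaring, the elementary implications from (4), the closed graph theorem for (1)$\Rightarrow$(4), and the adjoint/extension construction for (3)$\Rightarrow$(4). One minor wording slip: in (1)$\Rightarrow$(4) you write ``assuming injectivity on $(\ker\op T)^{\perp}$'' --- the restriction of $\op T$ to $(\ker\op T)^{\perp}$ is automatically injective, so this is a fact you use, not an assumption you make.
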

Of course, for self-adjoint operators~$\op S,\op T\colon X\to X$ the norm
estimate from~(\ref{enum:norm-ineq}) is again for $\op S=\op S^{\ast}$
and~$\op T=\op T^{\ast}$. Also, if the operator~$\op T$ is injective then the
composition~$\op T^{-1}\op S$ is a bounded operator and~$\norm{\op
  T^{-1}\op S}{} =
\norm{\op R}{}\leq C$.

As it was stressed above, the governing operators~$\prc$ and~$\bastb =
\op B^{\ast} \op B$ are non-negative self-adjoint. As an immediate
application, by considering~$\op B\colon X \to Y$ and its self-adjoint
analog~$\op B^{\ast}\op B\colon X \to X$ we plainly have that
$$
\norm{\op Bu}{X} =
\norm{\lr{\op B^{\ast}\op B}^{1/2} u}{X},\quad u\in~X,
$$ such that~$\range(\op B^{\ast})
= \range(\lr{\op B^{\ast}\op B}^{1/2})= \range(\bastb^{1/2})$.
\medskip

Before formally introducing the link assumption, we first make the standing assumption that the compound
mapping~$\op \Sigma^{-1/2}\op K\colon X\to Y$ is bounded. %
The link condition will
provide us with a 'tuning index function'~$\psi$ such that the
ranges of~$\psi(\prc)$ and~$\op K^{\ast}\op \Sigma^{-1/2}$ coincide, and hence
we shall assume that
$$
\norm{\psi(\prc)v}{X} \asymp \norm{\op \Sigma^{-1/2}\op K v}{Y},\quad
v\in X.
$$
Using this with~$v:= \prc^{1/2}u,\ u\in X$ we arrive at
$$
\norm{\Theta(\prc)u}{X} \asymp
\norm{\op \Sigma^{-1/2}\op K \prc^{1/2}u}{Y} = \norm{\bastb^{1/2}u}{Y},\quad
u\in X,
$$
where we introduced the function
\begin{equation}\label{eq:theta}
\Theta(t) = \Theta_{\psi}(t):= \sqrt t \psi(t),\quad t>0,
\end{equation} 
and the operator~$\bastb$ \peter{is given} as before from~$\op B
:= \op \Sigma^{-1/2}\op K \prc^{1/2}$ as~$\bastb = \op B^{\ast}\op B$.
We
observe that its square~$\Theta^{2}$ is strictly monotone, and it increases super-linearly. Below,
the inverse will play an important role, and we stress that this will
be a sub-linearly increasing index function, as this typically is the case for power
type functions~$g(t) := t^{q}$ with~$0 < q \leq 1$. So, we  formally
make the following
\begin{ass}
  [link condition]\label{ass:link}
There are an index function~$\psi$, and constants
$0< m \leq 1 \leq M < \infty$ such that
\begin{equation}
      \label{eq:link-psi}
     m \norm{\psi(\prc) u}{} \leq \norm{
      \op \Sigma^{-1/2}\op K  u}{} \leq  M \norm{\psi(\prc) u}{},\quad u\in X.   
    \end{equation}
Moreover, with the function~$\Theta$  from~(\ref{eq:theta}), \peter{we assume
that} the related function
\begin{equation}
  \label{eq:f-interpolation}
   f_{0}(s) := \lr{\lr{\Theta^{2}}^{-1}(s)}^{1/2},\quad s>0,
\end{equation}
has an  \emph{operator concave square~$f_{0}^{2}$}.
\end{ass}

We first draw the following consequence. For this we let
\begin{equation}
  \label{eq:varphi0}
  \varphi_{0}(t) := \sqrt t,\quad t>0,
\end{equation}
throughout this study.
\begin{prop}\label{prop:C0}
  Under Assumption~\ref{ass:link} we have that~$\range(\prc^{1/2}) =
  \range(f_{0}(\bastb))$. 
In particular the operator~$f_{0}(\bastb) \varphi_{0}(\prc)^{-1}$ \footnote{\peter{We agree
  upon the following convention. For an index function, say~$s\mapsto
  f(s)$ the symbol~$f^{-1}$ denotes the inverse function, whereas for
  a related operator~$f(\op G)$ the symbol~$f(\op G)^{-1}$ denotes the
  inverse operator, corresponding to the reciprocal function, i.e.,\
  $f(\op G)^{-1} = \lr{\frac 1 f}(\op G)$. There is a little
  ambiguity, but the precise meaning will be clear form the context.}}is norm
bounded by~$M$.
\end{prop}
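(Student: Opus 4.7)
The plan is to turn the link condition~(\ref{eq:link-psi}) into a two-sided operator inequality between $\bastb$ and $\Theta^{2}(\prc)$, transport that inequality through $f_{0}^{2}$, and then invoke Theorem~\ref{thm:douglas} in both directions. First I would substitute $v := \prc^{1/2} u$ in~(\ref{eq:link-psi}); by functional calculus $\psi(\prc)\prc^{1/2} = \Theta(\prc)$, and by definition of~$\op B$ one has $\norm{\op\Sigma^{-1/2}\op K\prc^{1/2} u}{}^{2} = \scalar{\bastb u}{u}$. Squaring~(\ref{eq:link-psi}) therefore yields
$$ m^{2}\,\Theta^{2}(\prc) \leq \bastb \leq M^{2}\,\Theta^{2}(\prc) $$
in the operator ordering introduced above.

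Next I would apply $f_{0}^{2}$ to this chain. By Assumption~\ref{ass:link}, $f_{0}^{2}$ is operator concave with $f_{0}^{2}(0) = 0$, hence operator monotone (as noted right after the definition of operator concavity), so the inequalities survive. To clear the scalar factors $m^{2}$ and $M^{2}$, I would use that any concave $f\colon[0,\infty)\to[0,\infty)$ with $f(0) = 0$ satisfies $f(\lambda s) \leq \lambda f(s)$ for $\lambda \geq 1$ and $f(\lambda s) \geq \lambda f(s)$ for $\lambda \in [0,1]$. Applied pointwise on the spectrum of $\prc$ and lifted via the spectral theorem (legitimate because $f_{0}^{2}(\lambda\Theta^{2}(\prc))$ and $\lambda f_{0}^{2}(\Theta^{2}(\prc))$ are both functions of the single operator~$\prc$), together with the identity $f_{0}^{2}\circ\Theta^{2} = \mathrm{id}$, this collapses the chain to
$$ m^{2}\,\prc \leq f_{0}^{2}(\bastb) \leq M^{2}\,\prc. $$

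Finally I would invoke Theorem~\ref{thm:douglas} with $\op S := f_{0}(\bastb)$ and $\op T := \varphi_{0}(\prc) = \prc^{1/2}$, both self-adjoint. The upper bound is exactly condition~(2) of the theorem, producing $\range(f_{0}(\bastb)) \subset \range(\prc^{1/2})$ together with $\norm{f_{0}(\bastb) x}{} \leq M\norm{\prc^{1/2} x}{}$ for all $x \in X$; substituting $x = \prc^{-1/2} v$ on the dense range of $\prc^{1/2}$ (here the assumed injectivity of $\prc$ enters) supplies the claimed bound $\norm{f_{0}(\bastb)\varphi_{0}(\prc)^{-1}}{} \leq M$. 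The lower bound $m^{2}\prc \leq f_{0}^{2}(\bastb)$ gives the reverse range inclusion $\range(\prc^{1/2}) \subset \range(f_{0}(\bastb))$ by the same reasoning, so the two ranges coincide. The only delicate step is the middle one: operator monotonicity of $f_{0}^{2}$ is genuinely needed because $\bastb$ and $\Theta^{2}(\prc)$ need not commute, whereas the scalar constants $m^{2}$ and $M^{2}$ can be extracted afterwards by ordinary concavity, since at that point only functions of~$\prc$ appear on the outside.
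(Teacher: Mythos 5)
Your proof is correct and follows essentially the same route as the paper: rewrite the link condition in terms of $\Theta$ to get a two-sided operator inequality between $\Theta^{2}(\prc)$ and $\bastb$, apply $f_{0}^{2}$ using its operator monotonicity (a consequence of operator concavity), peel off the constants $m^{2},M^{2}$ by ordinary scalar concavity since at that stage only functional calculus of a single operator is involved, and finish with Douglas' theorem. The only cosmetic difference is that you extract the scalar factors on the $\prc$-side after applying $f_{0}^{2}$, whereas the paper first moves $\tfrac{1}{m^{2}}$ inside $\bastb$ and then peels it off on the $\bastb$-side; both are equivalent.
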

\begin{proof}
Arguing as above, the inequalities in~(\ref{eq:link-psi}) have their 
counterpart for the function~$\Theta$ as
\begin{equation}
      \label{eq:link-theta}
     m \norm{\Theta(\prc) u}{} \leq \norm{
       \bastb^{1/2} u}{} \leq  M \norm{\Theta(\prc) u}{},\quad u\in X.   
    \end{equation}
We can rewrite the left hand side as
$$
\Theta^{2}(\prc) \leq \frac 1 {m^{2}} \bastb. 
$$
Since $f_0^2$ is assumed
to be operator concave, and hence operator monotone we conclude that
\begin{equation}
  \label{eq:prc-upper-bound}
\prc \leq f_{0}^{2}\lr{ \frac 1 {m^{2}} \bastb}\leq \frac 1 {m^{2}}  f_{0}^{2}\lr{\bastb},
\end{equation}
where we used that~$0< m \leq 1$.
Rewriting this in terms of a norm inequality shows
that
\begin{equation}
  \label{eq:1m-bound}
\norm{\prc^{1/2}u}{X} \leq \frac 1 m \norm{f_{0}(\bastb)u}{X},\
u\in X,  
\end{equation}
 and hence that~$\range(\prc^{1/2})\subseteq
\range(f_{0}(\bastb))$. The other inclusion is proven similar by using
the right hand side of~(\ref{eq:link-theta}), and hence it is
omitted. The norm boundedness is a consequence of
Theorem~\ref{thm:douglas}, as it was stressed after its formulation.
\end{proof}

 Basically, the above inequalities in~(\ref{eq:link-theta}) correspond to Assumption~3.1 (2 \& 3)
from~\cite{MR3084161}, see Section~6, ibid., if the function~$\Theta$
is a power function. 

\subsection{Linking commuting operators}
\label{sec:commute}

In previous studies, dealing with commuting operators, no functional
dependence was assumed, except the recent survey~\cite{AM2014}.
\peter{As it will be shown next} the present setup of a link condition extends previous
studies restricted to commuting operators. The calculus with using
functional dependence instead of 
asymptotic behavior of singular numbers seems simpler to handle.

We
start with the following technical assertion.
\begin{lem}
Suppose that we have commuting self-adjoint non-negative compact
operators~$\op G, \op G^\prime$
in Hilbert space. If the operator~$\op G$ has only simple eigenvalues then there is a continuous function~$\psi\colon
[0,\norm{\op G}{}]\to \real^{+}$, with~$\lim_{u\to 0}\psi(u) = 0$,
such that~$\psi(\op G)
= \op G^{\prime}$.
\end{lem}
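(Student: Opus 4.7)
The plan is to use the spectral theorem for compact self-adjoint operators, exploit the simplicity hypothesis to turn commutativity into joint diagonalizability, and then build $\psi$ explicitly on the (countable) spectrum of $\op G$ before extending continuously to the whole interval $[0,\norm{\op G}{}]$.

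First, by the spectral theorem applied to $\op G$ I get an orthonormal basis $\{e_j\}$ of $X$ consisting of eigenvectors, $\op G e_j = \lambda_j e_j$, with $\lambda_j \to 0$ by compactness and with all the $\lambda_j$ pairwise distinct by the simple-eigenvalue hypothesis (0 can occur at most once as an eigenvalue). Since $\op G'$ commutes with $\op G$, it preserves each spectral subspace of $\op G$; as these are one-dimensional, every $e_j$ is also an eigenvector of $\op G'$, and I obtain $\op G' e_j = \mu_j e_j$ with $\mu_j\ge 0$. This step converts the operator identity I want to prove into a scalar interpolation problem: I need a continuous $\psi$ on $[0,\norm{\op G}{}]$ with $\psi(\lambda_j) = \mu_j$ for all $j$ and $\psi(0)=0$.

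Next I construct $\psi$ explicitly. Because $\op G'$ is compact, applying it to the bounded sequence $\{e_j\}$ (which converges weakly to $0$) shows $\mu_j = \norm{\op G' e_j}{} \to 0$ along any enumeration with $\lambda_j\to 0$. Now I order the nonzero eigenvalues as $\lambda_{(1)} > \lambda_{(2)} > \cdots$, set $\psi(\lambda_{(k)}) := \mu_{(k)}$ and $\psi(0):=0$, and on each closed interval $[\lambda_{(k+1)},\lambda_{(k)}]$ between consecutive eigenvalues I define $\psi$ by linear interpolation; on $[\lambda_{(1)}, \norm{\op G}{}]$ I set $\psi$ equal to $\mu_{(1)}$ (or interpolate linearly to any chosen value). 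This yields a non-negative function on $[0,\norm{\op G}{}]$ which is manifestly continuous at every point of the form $\lambda_{(k)}$ (the two linear pieces meet there) and on each open subinterval.

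The main obstacle, and the only genuinely nontrivial point, is continuity at $0$. On each interval $[\lambda_{(k+1)},\lambda_{(k)}]$ the values of $\psi$ lie between $\mu_{(k+1)}$ and $\mu_{(k)}$; since $\mu_{(k)}\to 0$ as $\lambda_{(k)}\to 0$, for any $\varepsilon>0$ I can pick $K$ with $\mu_{(k)}<\varepsilon$ for all $k\ge K$, and then on $[0,\lambda_{(K)}]$ the function $\psi$ is bounded by $\varepsilon$. This gives $\lim_{u\to 0}\psi(u)=0$, so $\psi$ is continuous on the full interval. Finally, $\psi(\op G)$ and $\op G'$ agree on the orthonormal basis $\{e_j\}$, both mapping $e_j$ to $\mu_j e_j$, so by boundedness and density they coincide as operators, which completes the argument.
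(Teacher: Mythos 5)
Your proof is correct and follows essentially the same route as the paper's: joint diagonalization via the simple-eigenvalue hypothesis, decreasing enumeration of the $\op G$-eigenvalues, linear interpolation between the corresponding $\op G'$-eigenvalues, and continuity at zero from the fact that those eigenvalues accumulate only at zero (which you justify a bit more explicitly, via compactness and weak convergence of the orthonormal sequence). The only addition you make is the closing verification that $\psi(\op G)$ and $\op G'$ agree on the joint eigenbasis, a step the paper leaves implicit.
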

\begin{proof}
Indeed, the pair~$\op G,\op G^{\prime}$ is commonly diagonalizable, and we may consider
(infinite) diagonal matrices~$D_{s}$ and~$D_{t}$, having the
eigenvalues of $\op G$ and~$\op G^{\prime}$ on the diagonals. Since all eigenvalues
of~$\op G$ were assumed to be simple, we can assume
that~$s_{1} > s_{2} > \dots >0$. The corresponding eigenvalues
for~$\op G^{\prime}$
will not be ordered, in general. We consider the mappings~$s,t\colon
\nat\to \real^{+}$ assigning~$s(j) = s_{j}$ and~$t(j) := t_{j}$,
respectively. The mapping~$s$ is strictly decreasing, such that we may
consider the composition~$\bar \psi:= t\circ s^{-1}\colon \lr{s_{j}} \mapsto
\lr{t_{j}}$. By linear interpolation this extends to a continuous
mapping~$\psi\colon (0,\norm{\op G}{}]\to \real^{+}$. We need to show
that~$\psi(u)\to 0$ as~$u\to 0$. But, since the sequence~$t_{j},\
j=1,2,\dots$ has zero as only accumulation point, there is~$N\in\nat$
with~$t_{n}\leq \varepsilon$ for~$n\geq N$. Let~$\delta:=
s_{N}>0$. Then for~$n\geq N$ we find that~$\psi(s_{n}) = t_{n}\leq
\varepsilon$, and by linear interpolation this extends to the whole
interval~$[0,\delta]$.
\end{proof}

Of course, we cannot find that the above function~$\psi$ be an index
function. For this to hold additional assumptions need to be
made. Here we consider the situation as it was assumed
in~\cite{MR2906881}, cf.~ Assumption~3.1 ibid.

\begin{prop}
  Suppose that with respect to the common eigenbasis~$e_{j},\
  j=1,2,\dots$ the 
corresponding eigenvalues~$s_{j}$ of the prior covariance, and $t_{j}$ of
the operator~$\op \Sigma^{-1/2}\op K$ obey some asymptotic behavior, say in
the power type case~$s_{j} = j^{-(1+2a)}$ and~$\bar m j^{-p} \leq
t_{j}\leq \bar M j^{-p}$ for $j\in\nat$ and parameters~$a,p>0$. 
Then Assumption~\ref{ass:link} holds for the (index) function~$\psi(t)
= \tfrac{\bar m + \bar M}2 t^{p/(1 + 2a)},\ t>0$, and with constants~$m:= 2\bar m/(\bar m +
\bar M)$, and~$M:= 2\bar M/(\bar m + \bar M)$.
\end{prop}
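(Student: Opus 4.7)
The plan is to exploit the common eigenbasis $\{e_j\}_{j\geq 1}$ of $\prc$ and $\op \Sigma^{-1/2}\op K$, which reduces the norm inequalities~(\ref{eq:link-psi}) to scalar inequalities between eigenvalues; and then to observe that the explicit $\psi$ makes $\Theta^2$ a pure power, so that $f_0^2$ is itself a fractional power for which operator concavity comes from the Löwner--Heinz inequality.

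First I would check that $\psi(t) = \frac{\bar m + \bar M}{2} t^{p/(1+2a)}$ is an index function, which is immediate since the exponent $p/(1+2a)$ is positive. Next, using $s_j = j^{-(1+2a)}$ I compute
\[
\psi(s_j) \;=\; \frac{\bar m + \bar M}{2}\, s_j^{p/(1+2a)} \;=\; \frac{\bar m + \bar M}{2}\, j^{-p},
\]
so the hypothesis $\bar m\, j^{-p}\leq t_j \leq \bar M\, j^{-p}$ translates (by dividing by $\psi(s_j)$) into $m\,\psi(s_j) \leq t_j \leq M\,\psi(s_j)$ with exactly the asserted constants $m$ and $M$. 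Expanding $u = \sum_j \scalar{u}{e_j} e_j$ in the common eigenbasis and applying Parseval's identity to both $\psi(\prc)u$ and $\op \Sigma^{-1/2}\op K u$ upgrades the scalar two-sided bound to the operator inequality~(\ref{eq:link-psi}).

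For the second part of Assumption~\ref{ass:link}, I compute $\Theta$ explicitly: from~(\ref{eq:theta}) we obtain $\Theta(t) = \frac{\bar m + \bar M}{2}\, t^{1/2 + p/(1+2a)}$, hence
\[
\Theta^2(t) \;=\; c\, t^{\beta}, \qquad \beta \;:=\; 1 + \frac{2p}{1+2a} \;>\; 1,
\]
with an explicit constant $c>0$. Inverting this scalar power, we find $f_0^2(s) = (\Theta^2)^{-1}(s) = c^{-1/\beta}\, s^{1/\beta}$, i.e.\ a constant multiple of $s^{1/\beta}$ with exponent $1/\beta \in (0,1)$. By the Löwner--Heinz theorem (see e.g.~\cite[Ch.~V]{MR1477662}) such powers are operator concave on $[0,\infty)$, so Assumption~\ref{ass:link} is satisfied.

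There is no serious obstacle: the argument is essentially bookkeeping to match the polynomial rates of $s_j$ and $t_j$ through the exponent $p/(1+2a)$, followed by the classical Löwner--Heinz step. The only mild subtlety worth noting is that the inequality $1/\beta < 1$ is forced by $p>0$, which is precisely why $f_0^2$ lands in the operator-concave regime and why the inverse of $\Theta^2$ is the sub-linearly increasing index function anticipated in the discussion after~(\ref{eq:theta}).
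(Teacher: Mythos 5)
Your proposal is correct and mirrors the paper's own proof: expand $u$ in the common eigenbasis to reduce~(\ref{eq:link-psi}) to the scalar two-sided bound $m\psi(s_j)\le t_j\le M\psi(s_j)$ (which the given constants make exact), then observe that $\Theta^2$ is a power of exponent $1+2p/(1+2a)>1$, so that $f_0^2(s)\propto s^{(1+2a)/(1+2a+2p)}$ with exponent in $(0,1)$, hence operator concave by Löwner--Heinz. The paper leaves the Löwner--Heinz citation implicit, but the content is identical.
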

\begin{proof}
First, by construction we find that~$0 < m \leq 1\leq M <\infty$. Also
we see that~$m\psi(t) = \bar m t^{p/(1 + 2a)}$. Thus,  for~$u =
\sum_{j=1}^{\infty} u_{j} e_{j}\in X$ we bound
\begin{align*} 
  m^{2} \norm{\psi(\prc)u}{}^{2} 
&= m^{2} \sum_{j=1}^{\infty} \psi^{2}(s_{j}) u_{j}^{2} = \bar m^{2}
  \sum_{j=1}^{\infty} j^{-2p}u_{j}^{2} \\ 
&\leq \sum_{j=1}^{\infty} t_{j}^{2}u_{j}^{2} = \norm{\op \Sigma^{-1/2}\op K u}{}^{2}.
\end{align*}
The other inequality is proven similarly, and we omit the
proof. Finally, the function~$f_{0}^{2}(s) \propto s^{\frac{1 + 2a}{1 + 2a +
  2p}}$ is operator concave, completing the proof. 
\end{proof}

Therefore, in order to have a fair comparison, if $\prc$ and $\bastb$
were commuting we would instead assume that 
$\Theta^{2}(\prc) =\bastb $. This should be kept in mind when
comparing the subsequent bounds.

\subsection{Prototypical example}
\label{sec:example}

\peter{The following example was first presented in~\cite{MR0488721} when considering
projection schemes in Hilbert scales. Let~$\Omega\subset \real^{2}$ be a
bounded, sufficiently smooth domain. Let~$H^{l}(\Omega)$ denote the
corresponding Sobolev spaces of order~$l\geq 0$, and for $l<0$ we
let~$H^{l}(\Omega) := \lr{H^{l}(\Omega)}^{\prime}$, the adjoint
space. We consider the Radon transform given as follows. Let~$Z:=
\set{(\omega,t),\ \omega\in\real^{2}, \norm{\omega}{}=1,\ t\in\real
}$. For given~$(\omega,t)$ we consider the line~$\set{u\in\real^{2},\
  \scalar u \omega = t}$, endowed with corresponding Lebesgue
measure~$\tau_{(\omega,t)}$. The Radon transform~$\op K\colon
L_{2}(\Omega) \to L_{2}(Z)$ is then given as 
$$
(\op K x)(\omega,t) := \int x(s)\; d\tau_{(\omega,t)}(s),\quad
(\omega,t)\in Z.
$$
It was shown in~\cite{MR0519587} that this operator obeys
$$
\norm{\op K x}{Y} \asymp \norm{x}{H^{-1/2}(\Omega)},\quad x\in H^{-1/2}(\Omega).
$$
Since the natural embedding~$L_{2}(\Omega)\hookrightarrow
H^{-1/2}(\Omega)$ is compact, there is a compact, bounded self-adjoint
operator~$\op G$ with~$\norm{\op G^{1/2}x}{L_{2}(\Omega)} =
\norm{x}{H^{-1/2}(\Omega)}$, and we refer to the construction
of operators generating Hilbert scales in~\cite[Capt.~IV,
\S~1.10]{MR649411}. Overall we arrive at
$$
\norm{\op K x}{Y} \asymp \norm{\op G^{1/2}x}{L_{2}(\Omega)},\quad x\in L_{2}(\Omega),
$$
which is a specific form of the link condition in
Assumption~\ref{ass:link}, when~$\op \Sigma =\op I$, and the
covariance operator~$\prc$ is a power of~$\op G$ in order to be of
trace class.
}

\subsection{Variable Hilbert scales and their interpolation}
\label{sec:vhs}

We recall the concept of variable
Hilbert scales.
Given an injective positive self-adjoint operator~$\op G$, and some index
function~$f$ we equip $\range(f(\op G))$ with the norm~$\norm{x}{f} =
\norm{w}{}$, where the element ~$w$ is (uniquely) obtained from~$x=
f(\op G) w$. This makes $\lr{\range(f(\op G)),\norm{\cdot}{f}}$ a Hilbert
space. Since this can be done for any index function~$f$ we agree to
denote the resulting spaces by $X_{f}^{\op G }$. 

\peter{Below} we shall consider the scales generated by $\prc$,
i.e.,\ $\xfg \varphi$, and by $\bastb$, hence the spaces~$\xfh f$ (We
shall reserve Greek letters for index functions related to the
scale~$\xfg \varphi$). 

We shall use interpolation of operators in variable Hilbert scales,
and we recall the fundamental result from~\cite{MR2277542}.
\begin{thm}
  [Interpolation theorem,
  {\cite[Thm.~5]{MR2277542}}]\label{thm:interpol} Let  $\op G,\op G^{\prime}\geq 0$ be self-adjoint operators with spectra in $[0,b]$
 and $[0,a]$, respectively. 
  Furthermore, let $\varphi,\rho$ and $r$ be index functions ($\rho$
  strictly increasing) on intervals
  $[0,b]$ and $[0,a]$, respectively, such that $b\geq \norm{\op G}{}$ and
  $\rho(b)\geq r(a)$. 
Then the function 
\begin{equation*}
f(t):= \varphi(\rho^{-1}(r(t))),\quad 0<t\leq a,  
\end{equation*}
is well defined.
The following  assertion holds true:
If  $t\to
  \varphi^2((\rho^2)^{-1}(t))$ is operator concave on $[0,\rho^2(b)]$ then
\begin{equation}
   \norm{\op S x}{}\leq C_1 \norm{x}{},\quad x\in X, \label{ali:cor1} 
\end{equation}
and
\begin{equation}
  \norm{\rho(\op G)\op S x}{}\leq C_2 \norm{r(\op G^{\prime})x}{},\quad x\in X,  \label{ali:cor2} 
\end{equation}
yield
\begin{equation}
 \norm{\varphi(\op G)\op S x}{}\leq \max\set{C_1,C_2} \norm{f(\op G^{\prime})x}{},\quad x\in X .  \label{ali:cor3}  
\end{equation}
\end{thm}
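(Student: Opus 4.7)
The plan is to reduce the desired inequality to an application of the Hansen--Pedersen--Jensen inequality for the operator concave function
$h(s):=\varphi^{2}((\rho^{2})^{-1}(s))$ on $[0,\rho^{2}(b)]$, whose operator concavity is exactly the hypothesis of the theorem. The identities I would keep in mind throughout are
$h(\rho^{2}(\op G))=\varphi^{2}(\op G)$ and $h(r^{2}(\op G^{\prime}))=f^{2}(\op G^{\prime})$; the latter holds because $(\rho^{2})^{-1}(r^{2}(t))=\rho^{-1}(r(t))$. Note that $h$ is itself an index function with $h(0)=0$, and that it is operator monotone on $[0,\rho^{2}(b)]$ by the theorem of Bhatia already cited in~\S\ref{sec:link}.

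First I would set $C:=\max\set{C_{1},C_{2}}$ and introduce $\op V:=\op S/C$. From~(\ref{ali:cor1}) one has $\norm{\op V x}{}\leq \norm{x}{}$, so $\op V$ is a contraction on $X$. The spectrum of $\rho^{2}(\op G)$ is contained in $[0,\rho^{2}(b)]$, and the spectrum of $\op V^{\ast}\rho^{2}(\op G)\op V$ lies in the same interval because $\op V$ is a contraction; the hypothesis $\rho(b)\geq r(a)$ further guarantees that the spectrum of $r^{2}(\op G^{\prime})$ lies in $[0,\rho^{2}(b)]$, so that all functional calculi below are legitimate.

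The core of the argument is a three--step chain of operator inequalities. First, the Hansen--Pedersen--Jensen inequality applied to the operator concave function $h$ (with $h(0)=0$) and the contraction $\op V$ yields
\begin{equation*}
\op V^{\ast}\, h(\rho^{2}(\op G))\, \op V \;\leq\; h(\op V^{\ast}\rho^{2}(\op G)\op V).
\end{equation*}
Second, hypothesis~(\ref{ali:cor2}), after dividing by $C$ and using $C_{2}\leq C$, reads $\norm{\rho(\op G)\op V x}{}\leq \norm{r(\op G^{\prime})x}{}$, which is the scalar reformulation of
$\op V^{\ast}\rho^{2}(\op G)\op V \leq r^{2}(\op G^{\prime})$. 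Third, operator monotonicity of $h$ promotes this to
\begin{equation*}
h(\op V^{\ast}\rho^{2}(\op G)\op V) \;\leq\; h(r^{2}(\op G^{\prime}))=f^{2}(\op G^{\prime}).
\end{equation*}
Chaining these inequalities and using $h(\rho^{2}(\op G))=\varphi^{2}(\op G)$ gives $\op V^{\ast}\varphi^{2}(\op G)\op V \leq f^{2}(\op G^{\prime})$; taking norms and multiplying by $C$ recovers~(\ref{ali:cor3}).

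The nontrivial step is invoking Hansen--Pedersen--Jensen in the correct form: the scalar concavity of $h$ alone does not suffice, one genuinely needs the operator concavity assumed in the hypothesis, and the normalization $\op V:=\op S/C$ is chosen precisely to supply the contraction required by that inequality. Once the spectral compatibility (with $\rho^{2}(b)$ dominating the spectra of $\rho^{2}(\op G)$, $\op V^{\ast}\rho^{2}(\op G)\op V$ and $r^{2}(\op G^{\prime})$) has been verified, the remaining two steps are routine consequences of~(\ref{ali:cor2}) and operator monotonicity.
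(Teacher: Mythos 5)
The paper itself does not prove Theorem~\ref{thm:interpol}; it is cited from \cite[Thm.~5]{MR2277542}. So there is no ``paper's own proof'' to compare against, and your attempt has to be judged on its own merits. The good news is that you have identified exactly the right mechanism: normalize $\op S$ to a contraction $\op V$, recast \eqref{ali:cor2} as the operator inequality $\op V^{\ast}\rho^{2}(\op G)\op V\leq r^{2}(\op G^{\prime})$, apply the Hansen--Pedersen--Jensen inequality to the operator concave function $h=\varphi^{2}\circ(\rho^{2})^{-1}$ (for which $h(0)=0$ is essential), and then use the identities $h(\rho^{2}(\op G))=\varphi^{2}(\op G)$, $h(r^{2}(\op G^{\prime}))=f^{2}(\op G^{\prime})$. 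The spectral bookkeeping (spectra of $\rho^{2}(\op G)$, $\op V^{\ast}\rho^{2}(\op G)\op V$ and $r^{2}(\op G^{\prime})$ all lie in $[0,\rho^{2}(b)]$, the last one thanks to $\rho(b)\geq r(a)$) is carried out correctly. This is the expected proof for this class of interpolation theorems.

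One step deserves scrutiny, however, and you may want to flag it rather than dismiss it as routine. In the chain $\op V^{\ast}h(\rho^{2}(\op G))\op V\leq h(\op V^{\ast}\rho^{2}(\op G)\op V)\leq h(r^{2}(\op G^{\prime}))$, the second inequality needs \emph{operator monotonicity} of $h$, not merely operator concavity, and you invoke this by citing the remark in \S\ref{sec:link}. That remark, via \cite[Thm.~V.2.5]{MR1477662}, applies verbatim to functions on $[0,\infty)$; on a \emph{bounded} interval $[0,\rho^{2}(b)]$ operator concavity together with $h(0)=0$ and scalar monotonicity does \emph{not} automatically give operator monotonicity. For instance $h(t)=t-t^{2}/4$ on $[0,1]$ is operator concave, vanishes at $0$, and is strictly increasing, yet fails to be operator monotone there (its analytic continuation $z\mapsto z-z^{2}/4$ does not map the upper half plane into itself, so it is not a Pick function over $(0,1)$). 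This does not affect the concrete examples in the paper --- for power or logarithmic $h$ operator monotonicity holds directly by the Loewner--Heinz theorem --- but in the stated generality your argument (and indeed the paper's blanket assertion that operator concave implies operator monotone) needs the additional hypothesis that $h$ be operator monotone on $[0,\rho^{2}(b)]$, or that the operator concavity be posited on all of $[0,\infty)$. You should either add that as an explicit hypothesis, or supply a separate argument that the particular $h=\varphi^{2}\circ(\rho^{2})^{-1}$ arising here is operator monotone on the relevant interval. The remainder of the proof, including the reduction of \eqref{ali:cor2} to the quadratic-form inequality and the final norm estimate, is correct as written.
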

 We depict the
interpolation setup in Figure~\ref{fig:interp}.
\begin{figure}
\center \fbox{\parbox{0.95\textwidth}{
  \centering
 \begin{equation*}   
\begin{CD}
    \op G : @. X_{\rho}^{\op G} @>J_{\rho}>> X_{\varphi}^{\op G}
    @> J_{\varphi}>> X\\ 
    @.  @VV \op S^{\ast} V @VV \op S^{\ast}  V @VV \op S^{\ast} V \\
    \op G^{\prime}: @. X_{r}^{\op G^{\prime}} @>J_{r}>>  X_{f }^{\op G^{\prime}} @>J_{f}>> X
  \end{CD}
\end{equation*} 
\caption{The setup of interpolation. \peter{The mappings~$\op J_{\circ}$
  denote the canonical embeddings.}
  The position of
    $X_{\varphi}^{\op G}$
    between $X_{\rho}^{\op G}$ and $X$ is 
given by the function $t\to \varphi^{2}((\rho^{2})^{-1}(t))$,  and $f$ is
determined in such a way that $X_{f }^{\op G^{\prime}}$ has the appropriate position in
the scale on bottom.}
  \label{fig:interp}
}}
\end{figure}
\begin{rem}\label{rem:inter}
We mention the following important fact. The conditions on the
operator~$\op S$ in Theorem~\ref{thm:interpol} correspond to the
Figure~\ref{fig:interp} with $\op S^{\ast}$, the adjoint operator. 
 We refer to~\cite[Cor.~2]{MR2277542} for details.

  It is important to notice that, in contrast to the commuting case no
  link between the scales $X_{\varphi}^{\op G}$ and $X_{f }^{\op G^{\prime}}$ can be
  established whenever $\varphi$ is beyond $\rho$ ($\rho \prec \varphi$), or $f$ is beyond
  $r$ ($r \prec f$).
\end{rem}
For the above interpolation it is crucial that the space~$X^{\op G}_{\varphi}
$ is intermediate between $X^{\op G}_{\rho}$ and $X$, i.e.,\ we have
continuous embeddings as in Figure~\ref{fig:interp}. The position is
described as follows.
\begin{Def}
  [position of an intermediate Hilbert space]\label{def:position}
The position of $X_{\varphi}^{\op G}$
    between $X_{\rho}^{\op G}$ and $X$ is 
given by the function $t\to \varphi^{2}((\rho^{2})^{-1}(t))$.
\end{Def}
\begin{rem}
  If we imagine that the spaces~$X^{\op G}_{\rho}$ and $X^{\op G}_{\varphi}$
  were Sobolev Hilbert spaces with smoothness $0< f \leq r$ then the
  position would be the quotient~$f/r\leq1$. In the present context
  this corresponds to the power type function~$t \mapsto t^{f/r}$
  which is concave, even operator concave, and we have seen in
  Theorem~\ref{thm:interpol} that operator concavity is essential for
  establishing results on operator interpolation.
\end{rem}

\section{Bounding the bias}
\label{sec:bias-bound}

We shall use the interpolation result with $\op G:= \prc$ and $\op
G^{\prime} = \bastb$ from above, and
for various index functions and operators~$\op S$. 
We recall the description of the bias in the
decomposition~(\ref{eq:base-decomposition}) given
in~(\ref{eq:bias-form}) as
$$
\bx(\alpha) =  \norm{\prc^{1/2} \sa(\bastb)\prc^{-1/2}\xast}.
$$
To proceed we assign smoothness to the data generating element~$\xast$
\emph{relative to the covariance operator~$\prc$}.
\begin{ass}
  [source set]\label{ass:smoothness}
There is an index function~$\varphi$ such that
$$
\xast \in\af :=  \set{x,\quad x= \varphi(\prc)v,\ \norm{v}{}\leq 1}.
$$
\end{ass}

\peter{Using the
estimate~(\ref{eq:1m-bound}) from the proof of}
Proposition~\ref{prop:C0},  we can bound for an element~$\xast$ \peter{which
  obeys Assumption~\ref{ass:smoothness}} the bias~$\brf$ as
\begin{align*}
  \brf & \leq \frac 1 m \norm{f_{0}(\bastb) \funf(\bastb)
    \varphi_{0}(\prc)^{-1} \varphi(\prc)}{}\\
& = \frac 1 m \norm{ \funf(\bastb)f_{0}(\bastb)
    \varphi_{0}(\prc)^{-1} \varphi(\prc)}{}.
\end{align*}
Again, we emphasize that the intermediate operator~$f_{0}(\bastb)
    \varphi_{0}(\prc)^{-1}$ is bounded in norm, such that the right hand
    side is finite.

In our subsequent analysis we shall distinguish three cases. These are
determined by the relation of the given function~$\varphi$ with
respect to the function~$\varphi_{0}$ and the benchmark~$\Theta$. 
%
\peter{These cases
    are not exhaustive, i.e.,\ there are index functions~$\varphi$ for
    which neither of the cases applies.}
\begin{figure}[ht]
\center
\fbox{\parbox{0.95\textwidth}{
  \centering
 \begin{description}
\item[regular] This  case is obtained when $\varphi_{0}(\prc)^{-1}
\varphi(\prc)$ is a bounded operator, and hence when $\varphi_{0}\prec
\varphi\prec  \Theta$.
\item[low-order] When $1 \prec \varphi\prec \varphi_{0}$ we speak of
  the low-order case.
\item[high-order] If $\varphi$ is
beyond the benchmark $\Theta$ ($\Theta\prec \varphi$)
then we call this the high-order case. We shall need
additional assumptions to treat this.
\end{description} 
  \caption{Cases which are considered for interpolation.}
  \label{fig:cases}
}}
\end{figure}
We turn to the detailed analysis of these cases.

\subsection{Regular case:  $\varphi_{0}\prec \varphi \prec \Theta$} 
\label{sec:regular}

In this case the operator~$\prc^{-1/2} \varphi(\prc) =
\varphi_{0}(\prc)^{-1} \varphi(\prc)$ is a bounded 
self-adjoint operator. The position of the Hilbert space
$ \xfg \Theta \subset X_{\varphi/\varphi_{0}}^{\prc}\subset X$ is then
given through
\begin{equation}
  \label{eq:g-interpolation}
g^{2}(t) := \lr{\frac \varphi{\varphi_{0}}}^{2}\lr{\lr{\Theta^{2}}^{-1}(t)},\quad t>0. 
\end{equation}
\begin{prop}\label{pro:regular-bound}
  Suppose that $\varphi_{0}\prec \varphi \prec
  \Theta$, and that the function~$g^{2}$ is operator
  concave. Under Assumptions~\ref{ass:link} \&~\ref{ass:smoothness} we have
  that
$$
\brf  \leq \frac M m \norm{\funf(\bastb) \varphi\lr{f_{0}^{2}(\bastb)}}{}.
$$
\end{prop}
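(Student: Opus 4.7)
The plan is to combine the Douglas-type norm bound from Proposition~\ref{prop:C0} with Theorem~\ref{thm:interpol}, the latter applied to the index function $\varphi/\varphi_{0}$ (which is well defined because of the regular-case assumption $\varphi_{0}\prec\varphi$).

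First, invoking Assumption~\ref{ass:smoothness}, I write $\xast=\varphi(\prc)v$ with $\|v\|\le 1$. Then $\prc^{-1/2}\varphi(\prc)$ equals the bounded operator $(\varphi/\varphi_{0})(\prc)$, and substituting into~(\ref{eq:bias-form}) gives $\brf=\|\prc^{1/2}\sa(\bastb)(\varphi/\varphi_{0})(\prc)v\|$. Applying~(\ref{eq:1m-bound}) to the vector $\sa(\bastb)(\varphi/\varphi_{0})(\prc)v$, and then commuting the two functions of $\bastb$, yields
\[
  \brf \;\le\; \frac{1}{m}\,\bigl\|\sa(\bastb)\,f_{0}(\bastb)\,(\varphi/\varphi_{0})(\prc)\,v\bigr\|,
\]
which recovers the operator-norm estimate displayed just above the proposition.

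Next I apply Theorem~\ref{thm:interpol} with $\op G=\prc$, $\op G^{\prime}=\bastb$, $\varphi_{\mathrm{thm}}=\varphi/\varphi_{0}$, $\rho=\Theta$ and $r=\varphi_{0}$. Since $f_{0}^{2}=(\Theta^{2})^{-1}$, the interpolating function comes out as $f(t)=(\varphi/\varphi_{0})(f_{0}^{2}(t))$, and the operator-concavity requirement on the position function $\varphi_{\mathrm{thm}}^{2}\circ(\rho^{2})^{-1}$ is exactly the hypothesis that $g^{2}$ be operator concave. I choose the auxiliary operator $\op S$ so as to carry the bounded factor $f_{0}(\bastb)\varphi_{0}(\prc)^{-1}$, whose operator norm is $\le M$ by Proposition~\ref{prop:C0}; this supplies the constant $C_{1}=M$ in~(\ref{ali:cor1}), while the link~(\ref{eq:link-theta}) supplies $C_{2}=1/m$ in~(\ref{ali:cor2}). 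Unwinding $\op S$ and using the pointwise identity $\varphi_{0}(f_{0}^{2}(t))=f_{0}(t)$, which gives $f_{0}(t)\cdot(\varphi/\varphi_{0})(f_{0}^{2}(t))=\varphi(f_{0}^{2}(t))$, converts the right-hand side into $\|\sa(\bastb)\varphi(f_{0}^{2}(\bastb))\|$; combining with the preceding $1/m$ produces the claimed prefactor $M/m$.

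The delicate point is the precise placement of $f_{0}(\bastb)\varphi_{0}(\prc)^{-1}$ inside $\op S$: this is what allows the upper link constant $M$ to genuinely enter the estimate via Proposition~\ref{prop:C0}, instead of collecting a second factor of $1/m$ from the lower link constant alone (which would only yield the weaker prefactor $1/m^{2}$). Verifying that this choice of $\op S$ satisfies both hypotheses of Theorem~\ref{thm:interpol} with the advertised constants, and checking the operator-concavity hypothesis against the position function $g^{2}$, is the computational heart of the argument.
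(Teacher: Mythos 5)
There is a genuine gap in the interpolation step, and it comes from the particular choice of $\op S$. Your opening move matches the paper: from~(\ref{eq:1m-bound}) you get $\brf\le\frac1m\|\funf(\bastb)f_0(\bastb)(\varphi/\varphi_0)(\prc)\|$, and you then aim to bound $(\varphi/\varphi_0)(\prc)$ in terms of $g(\bastb)$ by Theorem~\ref{thm:interpol}, factor via Douglas, and finish with the identity $f_0(t)g(t)=\varphi(f_0^2(t))$. That is exactly the skeleton of the paper's argument. But the paper applies Theorem~\ref{thm:interpol} in the regular case with $\op S=\op I$; it is in the \emph{low-order} case (Proposition~\ref{pro:low-order}) that a nontrivial $\op S$ carrying $f_0(\bastb)\varphi_0(\prc)^{-1}$ is used, together with $\rho=\varphi_0$ and $r=f_0$. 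You appear to have imported that choice into the regular case while keeping $\rho=\Theta$ and $r=\varphi_0$, and the two do not coexist.

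Concretely, the constants you assign to (\ref{ali:cor1}) and (\ref{ali:cor2}) come from mutually exclusive choices of $\op S$. To obtain $C_2=1/m$ from the lower inequality in~(\ref{eq:link-theta}), hypothesis~(\ref{ali:cor2}) must read $\|\Theta(\prc)x\|\le C_2\|\bastb^{1/2}x\|$, which forces $\op S=\op I$. But then (\ref{ali:cor1}) is trivially $C_1=1$, not $C_1=M$. Conversely, if you insist that $\op S^\ast$ be $f_0(\bastb)\varphi_0(\prc)^{-1}$ so that $C_1=M$ via Proposition~\ref{prop:C0}, then~(\ref{ali:cor2}) becomes $\|\Theta(\prc)\varphi_0(\prc)^{-1}f_0(\bastb)x\|=\|\psi(\prc)f_0(\bastb)x\|\le C_2\|\bastb^{1/2}x\|$, which is not what~(\ref{eq:link-theta}) says and does not reduce to it, precisely because $\psi(\prc)$ and $f_0(\bastb)$ do not commute. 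So the verification you defer to ``the computational heart of the argument'' is exactly where the plan breaks. There is also a secondary issue: even granting both constants, the interpolation theorem yields the prefactor $\max\{C_1,C_2\}=\max\{M,1/m\}$, which is not $M$ in general since $m\le1\le M$ allows $1/m>M$. The fix is to follow the paper and take $\op S=\op I$ with $\rho=\Theta$, $r=\varphi_0$; both hypotheses then hold immediately, and the rest of your argument (Douglas factorization and the pointwise identity) goes through unchanged.
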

\begin{proof}
Under the assumptions made we apply Theorem~\ref{thm:interpol} with
$S=\op I$, the identity operator, to see that
$$
\norm{\lr{\frac{\varphi}{\varphi_{0}}}(\prc)u}{}\leq M \norm{g(\bastb) u}{} 
,\quad u\in X.
$$
By Theorem~\ref{thm:douglas} this means that for any $u,\ \norm{u}{}\leq 1$ we find that
$\bar u,\ \norm{\bar u}{}\leq M$ with
$\frac{\varphi}{\varphi_{0}}(\prc) u = g(\bastb) \bar u$.
Hence we can bound
\begin{align*}
  \norm{\funf(\bastb) f_{0}(\bastb) \lr{\frac{\varphi}{\varphi_{0}}}(\prc)
  u}{}
& = \norm{\funf(\bastb) f_{0}(\bastb) g(\bastb)\bar u}{}\\
&\leq M \norm{\funf(\bastb) f_{0}(\bastb) g(\bastb)}{}.
\end{align*}
But, we check that
$$
f_{0}(t) g(t) = f_{0}(t) \frac{\varphi(f_{0}^{2}(t))}{f_{0}(t)} =
\varphi(f_{0}^{2}(t)),\quad t>0,
$$
which gives 
$$
 \norm{\funf(\bastb) f_{0}(\bastb) \lr{\frac{\varphi}{\varphi_{0}}}(\prc)
  u}{} \leq M  \norm{\funf(\bastb) \varphi\lr{f_{0}^{2}(\bastb)}}{}.
$$
Since this holds for arbitrary $u\in X,\ \norm{u}{}\leq 1$ we conclude
that
$$
\brf \leq \frac 1 m \norm{\funf(\bastb) f_{0}(\bastb) \lr{\frac{\varphi}{\varphi_{0}}}(\prc)
  u}{}\leq  \frac M m \norm{\funf(\bastb) \varphi\lr{f_{0}^{2}(\bastb)}}{},
$$
and this completes the proof in the regular case.
\end{proof}

\subsection{Low-order case: $1 \prec \varphi \prec \varphi_{0}$}
\label{sec:low}
For the application of Theorem~\ref{thm:interpol}  we
recall the definition of the operator~$\op S:= f_{0}(\bastb)
\varphi_{0}(\prc)^{-1}\colon X \to X$.
\begin{prop}\label{pro:low-order}
Suppose that the Assumptions~\ref{ass:link} \&~\ref{ass:smoothness}
hold, and that $1 \prec \varphi \prec \varphi_{0}$. Then the position
  of $\xfg \varphi$ between $\xfg {\varphi_{0}}$ and $X$ is given by
  the function~$\varphi^{2}$. If $\varphi^{2}$ is operator concave
  then
$$
\brf \leq \frac M m \norm{\funf(\bastb)\varphi\lr{f_{0}^{2}(\bastb)}}{}.
$$
\end{prop}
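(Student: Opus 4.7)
The plan is to mirror the regular-case argument of Proposition~\ref{pro:regular-bound}, with the key difference that $\prc^{-1/2}\varphi(\prc)$ is no longer bounded once $\varphi\prec\varphi_{0}$. Instead of first ``peeling off'' a bounded operator $(\varphi/\varphi_{0})(\prc)$ and then interpolating, I would apply Theorem~\ref{thm:interpol} to an explicit composite operator and then use Theorem~\ref{thm:douglas} to move a factor across a non-commuting pair.

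Starting from (\ref{eq:bias-form}) with $\xast=\varphi(\prc)v$, $\norm{v}{}\le 1$, estimate (\ref{eq:1m-bound}) from Proposition~\ref{prop:C0} together with the commutativity of $\funf(\bastb)$ and $f_{0}(\bastb)$ gives
\begin{equation*}
\brf \;\le\; \frac{1}{m}\,\norm{\funf(\bastb)\,f_{0}(\bastb)\,\prc^{-1/2}\varphi(\prc)\,v}{}.
\end{equation*}
I then feed Theorem~\ref{thm:interpol} with $\op G:=\prc$, $\op G^{\prime}:=\bastb$, $\rho:=\varphi_{0}$, $r:=f_{0}$, and the operator
\[
\op S \;:=\; \prc^{-1/2}f_{0}(\bastb),
\]
which is the adjoint of the bounded operator from Proposition~\ref{prop:C0} and therefore extends to a bounded operator on $X$ with $\norm{\op S}{}\le M$; this supplies (\ref{ali:cor1}) with $C_{1}=M$. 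The identity $\varphi_{0}(\prc)\,\op S x = f_{0}(\bastb)x$, valid on~$X$ by the range equality of Proposition~\ref{prop:C0} and the injectivity of $\prc$, gives (\ref{ali:cor2}) with $C_{2}=1$. The position function $\varphi^{2}\circ(\varphi_{0}^{2})^{-1}=\varphi^{2}$ is operator concave by hypothesis, and the induced lower-scale function is $f(t)=\varphi(\varphi_{0}^{-1}(f_{0}(t)))=\varphi(f_{0}^{2}(t))$. Conclusion (\ref{ali:cor3}) then reads
\[
\norm{\varphi(\prc)\,\prc^{-1/2}f_{0}(\bastb)\,x}{}\;\le\;M\,\norm{\varphi(f_{0}^{2}(\bastb))\,x}{},\qquad x\in X.
\]

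What remains is to swap the non-commuting factors $\prc^{-1/2}$ and $f_{0}(\bastb)$ into the order demanded by the bias estimate. Since $\varphi(f_{0}^{2}(\bastb))$ is self-adjoint, taking adjoints of the previous inequality and invoking the factor-operator conclusion of Theorem~\ref{thm:douglas} produces an operator $\op R$ with $\norm{\op R}{}\le M$ such that
\[
f_{0}(\bastb)\,\prc^{-1/2}\varphi(\prc)\;=\;\varphi(f_{0}^{2}(\bastb))\,\op R.
\]
Inserting this into the preliminary bound on $\brf$ and using $\norm{v}{}\le 1$ yields the advertised estimate $\brf\le (M/m)\norm{\funf(\bastb)\varphi(f_{0}^{2}(\bastb))}{}$. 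The main obstacle is exactly this adjoint transfer: because $\prc^{-1/2}$ and $f_{0}(\bastb)$ do not commute, the interpolation theorem and the bias representation (\ref{eq:bias-form}) present them in opposite orders, and only the factor-operator content of Douglas' theorem lets one exchange them while preserving the constant $M$. A secondary technical point is that $\prc^{-1/2}\varphi(\prc)$ is to be read on its natural dense domain throughout the intermediate steps, and extended by continuity once the bounded composition $\varphi(f_{0}^{2}(\bastb))\op R$ has been identified.
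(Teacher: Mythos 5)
Your proposal is correct and is in substance the same proof the paper gives, just with the roles of $\op S$ and $\op S^{\ast}$ relabelled. The paper sets $\op S := f_0(\bastb)\varphi_0(\prc)^{-1}$ and applies Theorem~\ref{thm:interpol} to $\op S^{\ast}=\prc^{-1/2}f_0(\bastb)$, which is exactly the operator you call $\op S$; the two uses of the interpolation hypothesis are word-for-word the same (boundedness by $M$ from Proposition~\ref{prop:C0}, the exact identity $\varphi_{0}(\prc)\prc^{-1/2}f_{0}(\bastb)=f_{0}(\bastb)$ giving $C_2=1$, position function $\varphi^{2}$, induced lower function $\varphi\circ f_0^{2}$), and the final Douglas step producing a factor $\op R$ with $\norm{\op R}{}\le M$ and $f_{0}(\bastb)\prc^{-1/2}\varphi(\prc)=\varphi(f_{0}^{2}(\bastb))\op R$ is the same factorization the paper extracts (there phrased as ``for every $x=\op S\varphi(\prc)w$ there is $\bar w$ with $\norm{\bar w}{}\le M$''). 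Your closing remark on reading $\prc^{-1/2}\varphi(\prc)$ on its natural dense domain and extending after the bounded factorization is identified is a sound point that the paper leaves implicit, but it is not a different route.
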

\begin{proof}
We aim at applying Theorem~\ref{thm:interpol} for the
operator~$\op S^{\ast}$. From Proposition~\ref{prop:C0} we know
that~$\norm{\op S^{\ast}\colon X \to X}{}\leq M$. But also we see that
$$
\norm{\varphi_{0}(\prc) \op S^{\ast}u}{} =
\norm{\varphi_{0}(\prc)\varphi_{0}(\prc)^{-1} f_{0}(\bastb)u}{} =
\norm{f_{0}(\bastb)u}{} ,\ u\in X. 
$$
The position of~$\xfg \varphi$ between $\xfg {\varphi_{0}}$ and~$X$ is
given by
$$
\varphi^{2}\lr{\lr{\varphi_{0}^{2}}^{-1}(t)} = \varphi^{2}(t),\quad t>0,
$$
and this was assumed to be operator concave.
  Thus Theorem~\ref{thm:interpol} applies and yields
$\norm{\varphi(\prc) \op S^{\ast} u}{} \leq M \norm{g(\bastb)u}{},\ u\in
X$ for the function~$g$ given from
$$
g(t) := \varphi\lr{\varphi_{0}^{-1}(f_{0}(t))} =
\varphi\lr{f_{0}^{2}(t)},\quad t>0.
$$
By virtue of Theorem~\ref{thm:douglas} we
find that for every $x= \op S \varphi(\prc)w$ there is $\bar w,\ \norm{\bar w}{}\leq M$ with
$$
\op S \varphi(\prc)w  = g(\bastb) \bar w.
$$ 
We conclude that therefore
\begin{align*}
 \brf &\leq \frac 1 m \norm{\funf(\bastb) S \varphi(\prc)}{}
\leq  \frac M m \norm{\funf(\bastb)g(\bastb)}{}\\
 &=  \frac M m \norm{\funf(\bastb)\varphi\lr{f_{0}^{2}(\bastb)}}{}
\end{align*}
The proof is complete.
\end{proof}

\subsection{High-order case: $\Theta \prec \varphi$}
\label{high}

If we want to extend the results to smoothness beyond $\Theta$
then we need to assume a link condition at a later position than
$\bastb^{1/2} $. Therefore, we shall impose the following lifting condition.
\begin{ass}
  [lifting condition]\label{ass:lifting}
There are  some $\rpar >1$ and constants $m \leq 1 \leq M$ such that
\begin{equation}
      \label{eq:lift-r}
     m^{\rpar } \norm{\Theta^{\rpar}(\prc) u}{} \leq \norm{
       \bastb^{\rpar/2} u}{} =  M^{\rpar} \norm{\Theta^{\rpar}(\prc) u}{},\quad u\in X.   
    \end{equation}
\end{ass}
This is actually stronger than the original link condition from
Assumption~\ref{ass:link}. Indeed, as the Loewner--Heinz
Inequality asserts, for~$\rpar>1$ the function~$t\mapsto t^{1/\rpar}$ is operator
monotone, see~\cite[Thm.~V.1.9]{MR1477662}, or~\cite[Prop.~8.21]{MR1408680}
we have 
\begin{cor}\label{cor:lifting}
  Assumption~\ref{ass:lifting} yields Assumption~\ref{ass:link}.
\end{cor}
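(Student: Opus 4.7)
The plan is to translate the lifting condition into an operator inequality on $\bastb^{\rpar}$ versus $\Theta^{2\rpar}(\prc)$, apply the Loewner--Heinz inequality to bring the exponent down to $1$, and then undo the substitution $v\mapsto \prc^{1/2}v$ to recover inequality~(\ref{eq:link-psi}). More precisely, the two-sided norm estimate in Assumption~\ref{ass:lifting} is equivalent, via squaring and the identity $\|\bastb^{\rpar/2}v\|^{2}=\scalar{\bastb^{\rpar}v}{v}$, to the operator inequality
\begin{equation*}
m^{2\rpar}\,\Theta^{2\rpar}(\prc)\;\leq\;\bastb^{\rpar}\;\leq\;M^{2\rpar}\,\Theta^{2\rpar}(\prc).
\end{equation*}

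Since $\rpar>1$, the Loewner--Heinz inequality (cited in the statement) tells us that $t\mapsto t^{1/\rpar}$ is operator monotone on $[0,\infty)$. Applying this monotonicity to both sides of the displayed inequality yields
\begin{equation*}
m^{2}\,\Theta^{2}(\prc)\;\leq\;\bastb\;\leq\;M^{2}\,\Theta^{2}(\prc),
\end{equation*}
which reformulates as $m\,\norm{\Theta(\prc)v}{}\leq\norm{\bastb^{1/2}v}{}\leq M\,\norm{\Theta(\prc)v}{}$ for every $v\in X$. Using $\Theta(t)=\sqrt{t}\,\psi(t)$ and the commutativity within functional calculus we may write $\Theta(\prc)=\psi(\prc)\prc^{1/2}$, while $\norm{\bastb^{1/2}v}{}=\norm{\op\Sigma^{-1/2}\op K\prc^{1/2}v}{}$ since $\bastb=\op B^{\ast}\op B$.

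The final step is to substitute $v=\prc^{-1/2}u$. By injectivity of $\prc$ the range of $\prc^{1/2}$ is dense in $X$, so this substitution is legitimate on a dense subspace and produces
\begin{equation*}
m\,\norm{\psi(\prc)u}{}\;\leq\;\norm{\op\Sigma^{-1/2}\op K u}{}\;\leq\;M\,\norm{\psi(\prc)u}{},\quad u\in\range(\prc^{1/2}).
\end{equation*}
Since the standing assumption makes $\op\Sigma^{-1/2}\op K$ bounded, and $\psi(\prc)$ is bounded by functional calculus of the continuous function $\psi$ on the compact spectrum of $\prc$, all three terms depend continuously on $u$, and the inequality extends from the dense subspace $\range(\prc^{1/2})$ to all of $X$. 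This is exactly~(\ref{eq:link-psi}), and the hypothesis on operator concavity of $f_{0}^{2}$ is a property of the function $\psi$ that is shared verbatim between the two assumptions. The only real subtlety is the density argument used to pass from $u\in\range(\prc^{1/2})$ to arbitrary $u\in X$; I expect this to be the main, though still routine, point to get right.
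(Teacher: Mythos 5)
Your proof is correct and follows exactly the route the paper indicates: translate the lifting estimate into an operator inequality between $\bastb^{\rpar}$ and $\Theta^{2\rpar}(\prc)$, apply the Loewner--Heinz operator monotonicity of $t\mapsto t^{1/\rpar}$, and read off the link condition. The paper's own justification is precisely this one-line invocation of Loewner--Heinz; your version merely spells out the elementary bookkeeping (squaring, functional calculus, substitution $u=\prc^{1/2}v$, and the density argument back to (\ref{eq:link-psi})), which matches the equivalence between (\ref{eq:link-psi}) and (\ref{eq:link-theta}) already used in Proposition~\ref{prop:C0}.
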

\begin{rem}\label{rem:lifting}
  We stress that in case of commuting operators~$\prc$ and $\bastb$ the
  assumptions~\ref{ass:lifting} and~\ref{ass:link} are equivalent,
  i.e., the link condition  implies the lifting condition. This can
  be seen using the Gelfand-Naimark theorem, and we refer
  to~\cite[Prop.~8.1]{MR2213075} for a similar assertion with detailed
  proof.
\end{rem}

Again we shall deal with the smoothness $\varphi_{0}(\prc)^{-1}\varphi(\prc)$,
and, as in the regular case,  we ask for the position of the corresponding space between $\xfg
{\peter{\Theta^{\rpar}}}$ and $X$.
This gives the following result, extending the cases of regular and
low smoothness, however, under additional requirement on the link.
\begin{prop}
  \label{pro:high-smoothness}
Suppose that the Assumptions~\ref{ass:lifting} \&~\ref{ass:smoothness}
hold,  and that $\varphi/\varphi_{0} \prec
  \Theta^{\rpar}$. Assume that for~$g$ from~(\ref{eq:g-interpolation}) the
  function~$t\mapsto g^{2}(t^{1/\rpar})$ is operator 
  concave. Then we have
$$
\brf \leq \frac M m \norm{\funf(\bastb) \varphi\lr{f_{0}^{2}(\bastb)}}{}.
$$
\end{prop}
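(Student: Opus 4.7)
I would mimic the proof of the regular case (Proposition~\ref{pro:regular-bound}) almost verbatim, but apply Theorem~\ref{thm:interpol} at the \emph{lifted} scale~$\Theta^{\rpar}$ instead of at~$\Theta$. The role of the smoothness~$\varphi$ beyond the benchmark~$\Theta$ is precisely what forces us to anchor the interpolation higher up, and this is why the stronger Assumption~\ref{ass:lifting} is the natural replacement for the plain link condition.

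Concretely, I would first rewrite the left half of~(\ref{eq:lift-r}) as
$$\norm{\Theta^{\rpar}(\prc) u}{} \leq \frac{1}{m^{\rpar}} \norm{\bastb^{\rpar/2} u}{},\quad u\in X,$$
and then apply Theorem~\ref{thm:interpol} with $\op G := \prc$, $\op G^{\prime}:=\bastb$, $\op S := \op I$, $\rho := \Theta^{\rpar}$, $r(t) := t^{\rpar/2}$, and with the theorem's~$\varphi$ taken to be~$\varphi/\varphi_{0}$. The hypothesis $\varphi/\varphi_{0} \prec \Theta^{\rpar}$ places $\xfg{\varphi/\varphi_{0}}$ properly between $\xfg{\Theta^{\rpar}}$ and $X$; its position, in the sense of Definition~\ref{def:position}, is
$$\lr{\varphi/\varphi_{0}}^{2}\lr{(\Theta^{2\rpar})^{-1}(t)} = g^{2}(t^{1/\rpar}),$$
which is operator concave by assumption. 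A direct manipulation shows
$$\rho^{-1}(r(t)) = (\Theta^{\rpar})^{-1}(t^{\rpar/2}) = (\Theta^{2})^{-1}(t),$$
so that the interpolating function produced by the theorem coincides exactly with the~$g$ from~(\ref{eq:g-interpolation}).

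Theorem~\ref{thm:interpol} then yields
$$\norm{(\varphi/\varphi_{0})(\prc) u}{} \leq C\,\norm{g(\bastb) u}{},\quad u\in X,$$
with the constant~$C$ obtained by taking the maximum of the norm bound on~$\op S$ and the lifted-link constant~$1/m^{\rpar}$. From here the argument is exactly that of Proposition~\ref{pro:regular-bound}: Theorem~\ref{thm:douglas} produces a factorization through~$g(\bastb)$, estimate~(\ref{eq:1m-bound}) brings in a factor~$1/m$ from the inclusion $\range(\prc^{1/2})\subseteq \range(f_{0}(\bastb))$, and the algebraic identity $f_{0}(t) g(t) = \varphi(f_{0}^{2}(t))$ collapses the product inside the operator norm, giving
$$\brf \leq \frac{M}{m}\norm{\funf(\bastb)\varphi\lr{f_{0}^{2}(\bastb)}}{}.$$

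The only real work lies in the bookkeeping around the lifted scale: checking that the position function of the theorem reads~$g^{2}(t^{1/\rpar})$ (which is exactly the operator-concavity hypothesis of the proposition), and that~$\rho^{-1}\circ r$ reduces back to~$(\Theta^{2})^{-1}$ so that the interpolating function is once again~$g$. Once these two short computations are in hand, every subsequent step is a drop-in copy of the regular case, and I expect no genuine obstacle beyond tracking which power of the link constants appears where.
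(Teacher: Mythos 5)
Your proposal follows essentially the same route as the paper's own proof: rewrite the lifting condition as a one-sided norm inequality, apply Theorem~\ref{thm:interpol} with $\op S=\op I$ anchored at $\rho=\Theta^{\rpar}$, identify the position function as $g^{2}(t^{1/\rpar})$ (which is exactly the operator-concavity hypothesis), observe that the interpolating function reduces to the same $g$ as in the regular case, and then conclude verbatim as in Proposition~\ref{pro:regular-bound}. Your parametrization ($\op G'=\bastb$ with $r(t)=t^{\rpar/2}$) differs superficially from the paper's ($\op G'=\bastb^{\rpar/2}$ with $r=\mathrm{id}$), but both give $r(\op G')=\bastb^{\rpar/2}$ and both collapse the interpolating function to $g(\bastb)$; the two are equivalent. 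One small loose end you should tighten: you correctly read off the interpolation constant as $\max\{1,1/m^{\rpar}\}=1/m^{\rpar}$, but then simply assert the final bound $\tfrac{M}{m}\norm{\funf(\bastb)\varphi(f_{0}^{2}(\bastb))}{}$. Multiplying your $1/m^{\rpar}$ by the $1/m$ from~(\ref{eq:1m-bound}) gives $1/m^{\rpar+1}$, not $M/m$, so your constant bookkeeping does not actually produce the advertised $M/m$; the paper's own proof records the interpolation constant as $M$, which is what makes its $M/m$ come out. Either track the constant you actually obtained, or flag that the final constant is generic; as written there is a silent jump from $1/m^{\rpar+1}$ to $M/m$.
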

\begin{proof}
  The proof is similar to the regular case.  The function~$t\mapsto
  g^{2}(t^{1/\rpar})$ is exactly 
the position of $\xfg {\varphi/\varphi_{0}}$ between $\xfg
{\Theta^{\rpar}}$ and $X$ is given as
$$
g_{\rpar}^{2}(t) := \lr{\frac{\varphi}{\varphi_{0}}}^{2}\lr{\lr{\Theta^{2\rpar}}^{-1}(t)}.
$$
It is readily checked that~$\lr{\Theta^{2\rpar}}^{-1}(t) =\lr{\Theta^{2}}^{-1}(t^{1/\rpar}) $.
Thus we find that~$g_{\rpar}^{2}(t) = g^{2}(t^{1/\rpar})$, and this is assumed to be
operator concave, such that we can use the Interpolation
Theorem~\ref{thm:interpol}, and we find that
$$
\norm{ \frac{\varphi}{\varphi_{0}}(\prc)u}{} \leq M
\norm{f(\bastb^{\rpar/2})u}{},\quad u\in X,
$$
where the function~$f$ is given as
$$
f(t) :=\frac{\varphi}{\varphi_{0}}\lr{\lr{\Theta^{\rpar}}^{-1}(t)} =
\frac{\varphi}{\varphi_{0}}\lr{\Theta^{-1}(t^{1/\rpar})},\quad t>0. 
$$
This yields that~$f(\bastb^{\rpar/2}) =
\frac{\varphi}{\varphi_{0}}\lr{\Theta^{-1}(\bastb^{1/2})}$. 
 Now, as in the regular case,  we arrive at 
$$
\brf\leq \frac M m \norm{\funf(\bastb) f_{0}(\bastb) g(\bastb)}{}.
$$
We have seen there that $f_{0}(t) g(t) = \varphi(f_{0}^{2}(t))$,
which completes the proof.
\end{proof}

\begin{rem}
  The following comment seems interesting. In the high-order case, the
  function~$g^{2}$ will in general not  be operator concave. However,
  the assumption which is made above, says that by re-scaling this
  will eventually be operator concave if the scaling factor $\rpar$ is
  large enough, see the discussion at the end of this section.  Of
  course this does not mean that the lifting 
  Assumption~\ref{ass:lifting} will hold automatically. This is still a
  non-trivial assumption.
\end{rem}

\subsection{Saturation}
\label{sec:saturation}

In all the above cases, the low-order, regular and the high-order
one, we were able to derive a bias bound as
in Proposition~\ref{pro:high-smoothness}, albeit under case specific
assumptions. This bound cannot decay arbitrarily fast, and this is
known as \emph{saturation} in the regularization theory, see
again~\cite{AM2014}. Indeed, the maximal decay rate, as~$\alpha\to 0$
is linear, unless~$\xast =0$, which is a result of the structure of the
term~$\sa(\bastb)$. This maximal decay rate is
achieved when~$\varphi\lr{f_{0}^{2}(t)} \asymp t$, which  means that~$\varphi(t) \asymp \Theta^{2}(t)$. Thus the
maximal smoothness for which optimal decay of the bias can be achieved
is given by the index function~$\Theta^{2}$. This yields
the following important remark, specific for non-commuting operators.
\begin{rem}\label{rem:saturation}
Suppose that smoothness is given as in
Assumption~\ref{ass:smoothness} with an index function~$\varphi$, and
that we find the function~$\Theta$ as in~(\ref{eq:theta}).
  Within the range~$0 \prec \varphi\prec \Theta$ (low-order and
  regular cases) the link condition, Assumption~\ref{ass:link},
  suffices to yield optimal order decay of the bias. However, within
  the range~$\Theta \prec \varphi \prec \Theta^{2}$ the lifting, as
  given in Assumption~\ref{ass:lifting} cannot be avoided. This effect
  cannot be seen for commuting operators~$\prc$ and~$\bastb$, because
  there the lifting is equivalent to the
  original link condition, as discussed in
  Remark~\ref{rem:lifting}. We also observe that within the present
  context, the lifting to~$r=2$ would be enough due to the saturation
  at the function~$\Theta^{2}$.
\end{rem}

We exemplify the above bounds for the bias for power type behavior,
both of the smoothness in terms of~$\varphi(t) = t^{\beta}$, and the
linking function~$\psi(t) = t ^{\kappa}$. This results in a
function~$\Theta^{2}(t) = t^{1 + 2\kappa}$, which has operator concave
inverse. Thus, this requirement in Assumption~\ref{ass:link} is
fulfilled whatever~$\kappa>0$ is found.

Then, the low order case~$0 \prec \varphi\prec \varphi_{0}$ covers
the range~$0 < \beta \leq 1/2$, and in this range the
function~$\varphi^{2}(t) = t^{2\beta}$ is operator concave,
because~$2\beta \leq 1$. 

The regular case~$\varphi_{0} \prec \varphi \prec \Theta$ covers the
exponents~$1/2 \leq \beta\leq 1/2 + \kappa$, since the operator concavity was
assumed to hold for the function
$$
\lr{\frac{\varphi}{\varphi_{0}}}^{2}\lr{\lr{\Theta^{2}}^{-1}(t)} 
= t^{\frac{2(\beta -1/2)}{1 + 2\kappa}}.
$$

Finally, it is seen similarly, that the high order case covers the
range~$1/2 \leq \beta \leq 1/2 + (\rpar/2) (1 + 2 \kappa)$, which for $\rpar=2$
already is beyond the saturation point~\peter{$1 + 2\kappa$}.
 
\section{Bounding the posterior spread}
\label{sec:spread-bound}

We recall the structure of the posterior spread from~(\ref{eq:spread})
as
$$
  \tr{\Cad} = \delta^{2} \tr{\lr{\alpha\op I + \bastb}^{-1}\prc}. 
$$
As can be seen, the noise level~$\delta$ enters quadratically, and we
aim at finding the dependence upon the scaling parameter~$\alpha$. To
this end the following result proves to be useful.
\begin{prop}\label{pro:spread-bound}
Under Assumption~\ref{ass:link} we have that
$$
\tr{\Cad}\leq \frac {\delta^{2}} {m^{2}} \tr{\lr{\alpha\op I +
    \bastb}^{-1}f_{0}^{2}\lr{\bastb}}.
$$ 
\end{prop}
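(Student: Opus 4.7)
The plan is to reduce the statement to the operator inequality
$\prc \leq \frac{1}{m^{2}} f_{0}^{2}(\bastb)$, which has already been established in the proof of Proposition~\ref{prop:C0} (see equation~(\ref{eq:prc-upper-bound})), combined with the monotonicity of the trace on the cone of non-negative operators. The starting point is the identity~(\ref{eq:spread}),
$$
\tr{\Cad} = \delta^{2}\tr{\lr{\alpha\op I + \bastb}^{-1}\prc}.
$$
Since $\lr{\alpha\op I + \bastb}^{-1}$ is a bounded non-negative self-adjoint operator, it has a well-defined square root, and by the cyclic property of the trace the above expression equals
$$
\delta^{2}\tr{\lr{\alpha\op I + \bastb}^{-1/2}\prc\lr{\alpha\op I + \bastb}^{-1/2}.}
$$

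First, I would conjugate the operator inequality $\prc \leq \frac{1}{m^{2}} f_{0}^{2}(\bastb)$ by the non-negative operator $\lr{\alpha\op I + \bastb}^{-1/2}$ to obtain
$$
\lr{\alpha\op I + \bastb}^{-1/2}\prc\lr{\alpha\op I + \bastb}^{-1/2} \leq \frac{1}{m^{2}}\lr{\alpha\op I + \bastb}^{-1/2}f_{0}^{2}(\bastb)\lr{\alpha\op I + \bastb}^{-1/2}.
$$
Both sides are non-negative self-adjoint operators, and the left-hand side is trace class because $\prc$ is of finite trace. Applying the trace to this inequality and using once more the cyclic property on the right-hand side gives
$$
\tr{\lr{\alpha\op I + \bastb}^{-1}\prc} \leq \frac{1}{m^{2}}\tr{\lr{\alpha\op I + \bastb}^{-1}f_{0}^{2}(\bastb)},
$$
and multiplying by $\delta^{2}$ yields the claim.

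The main technical point to verify is simply that the conjugation $A \mapsto T^{\ast} A T$ preserves the ordering of self-adjoint operators and that the trace is monotone on non-negative operators; the finiteness of the bounding trace is implicit in the statement (if it were infinite the inequality would be vacuous, while the left-hand side is always finite by the trace class property of $\prc$). There is no obstacle beyond this routine verification, so the proof is essentially a direct application of~(\ref{eq:prc-upper-bound}).
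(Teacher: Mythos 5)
Your proof is correct and is essentially the same as the paper's: both start from inequality~(\ref{eq:prc-upper-bound}), conjugate by $\lr{\alpha\op I + \bastb}^{-1/2}$, use the cyclic property of the trace, and then pass the operator inequality to a trace inequality. The only cosmetic difference is that you invoke monotonicity of the trace on non-negative operators directly, whereas the paper routes this step through the Weyl Monotonicity Theorem (eigenvalue comparison) — these are equivalent justifications of the same step.
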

\begin{proof}
We start with the situation as given in~(\ref{eq:prc-upper-bound}). 
This order extends by multiplying $\lr{\alpha\op I +
  \bastb}^{-1/2}$ from both sides, such that we conclude that
$$
\lr{\alpha\op I + \bastb}^{-1/2} \prc\lr{\alpha\op I +   \bastb}^{-1/2} 
\leq \frac 1 {m^{2}} \lr{\alpha\op I + \bastb}^{-1/2}  f_{0}^{2}(\bastb)
\lr{\alpha\op I +   \bastb}^{-1/2} .
$$
Now we apply the Weyl Monotonicity Theorem, see
e.g.~\cite[Cor.~III.2.3]{MR1477662} to see that this inequality
applies to all singular numbers. But the operators on both sides are
self-adjoint and positive, such that singular numbers and eigenvalues
coincide. Thus we arrive at
\begin{align*}
  \tr{\Cad} & = {\delta^{2}}\tr{\lr{\alpha\op I + \bastb}^{-1}\prc} 
= {\delta^{2}}\tr{\lr{\alpha\op I + \bastb}^{-1/2}\prc\lr{\alpha\op I + \bastb}^{-1/2} }\\
& \leq \frac {\delta^{2}} {m^{2}} \tr{\lr{\alpha\op I + \bastb}^{-1/2}  f_{0}^{2}(\bastb)
\lr{\alpha\op I +   \bastb}^{-1/2} }\\
& = \frac {\delta^{2}} {m^{2}}\tr{\lr{\alpha\op I + \bastb}^{-1}  f_{0}^{2}(\bastb)},
\end{align*}
where we used the cyclic commutativity of the trace. The proof is complete.
\end{proof}

\section{Bounding the squared posterior contraction}
\label{sec:spc-bound}

In the previous sections we derived bounds for both the bias and the
posterior spread. In all the smoothness cases from
Section~\ref{sec:bias-bound} we arrived at a bound of the following
form. If $\xast$ has smoothness with index function~$\varphi$, and if
the link condition is with operator concave function~$f_{0}^{2}$
from~(\ref{eq:f-interpolation}) then it was shown in
Propositions~\ref{pro:regular-bound}--~\ref{pro:high-smoothness} that
\begin{equation}
  \label{eq:bias-bound-summary}
\bx(\alpha) \leq \frac M m \norm{\sa(\bastb)
  \varphi\lr{f_{0}^{2}(\bastb)}},\quad \alpha>0.  
\end{equation}
Also, the posterior spread was bounded in
Proposition~\ref{pro:spread-bound} as
$$
\tr{\Cad} \leq \frac {\delta^{2}} {m^{2}} \tr{\lr{\alpha\op I +
    \bastb}^{-1}f_{0}^{2}\lr{\bastb}},\quad \alpha >0.
$$ 
As was discussed in Remark~\ref{rem:saturation} we shall confine to the
case when~$\varphi \prec \Theta^{2}$, i.e.,\ before the saturation point.
 If this
is the case then we can bound, by using that~the function~$\sa$
obeys~$\sa(t) t \leq \alpha,\ t,\alpha>0$, the bias by
$$
\bx(\alpha) \leq \frac M m \varphi \lr{f_{0}^{2}(\alpha)},\quad \alpha>0.
$$
A similar 'handy' explicit bound for the posterior spread can hardly
be given. Under additional assumptions on the decay rate of the
singular numbers  more explicit bounds can be given. We refer
to~\cite[Sect.~4]{MR3406424}, in particular Assumption~5 and Lemma~4.2 ibid. for details.

Overall we obtain the following result.
\begin{thm}
  [Bound for the~$\spc$]\label{thm:spc}
Suppose that assumptions~\ref{ass:link} \&~\ref{ass:smoothness} hold
for index functions~$\varphi$ and~$f_{0}$. Under the assumptions of
Propositions~\ref{pro:regular-bound}, \ref{pro:low-order}
\&~\ref{pro:high-smoothness}, respectively, and if~$\varphi \prec \Theta^{2}$, then
\begin{equation}
  \label{eq:spc-overall}
  \spc(\alpha,\delta) \leq \frac{M^{2}}{m^{2}} \inf_{\alpha>0}\left[\varphi^{2}
  \lr{f_{0}^{2}(\alpha)} + 2 \tr{\lr{\alpha\op I +
    \bastb}^{-1}f_{0}^{2}\lr{\bastb}}\right],\  \alpha,\delta>0.
\end{equation}
\end{thm}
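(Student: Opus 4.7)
The plan is to assemble the three ingredients already in place: the decomposition~\eqref{eq:base-decomposition} of the squared posterior contraction, the unified bias estimate~\eqref{eq:bias-bound-summary} (furnished by whichever of Propositions~\ref{pro:regular-bound}, \ref{pro:low-order} or~\ref{pro:high-smoothness} applies to the given smoothness~$\varphi$), and the spread bound from Proposition~\ref{pro:spread-bound}. As a preparatory step I would invoke Proposition~1 of~\cite{AM2014}, which gives~$\Vad\leq\tr{\Cad}$, to fold the estimation variance into the spread and reduce the task to controlling
$$
\bx^{2}(\alpha)+2\tr{\Cad}.
$$

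The only substantive step is converting the operator norm~$\norm{\funf(\bastb)\varphi(f_{0}^{2}(\bastb))}{}$ from~\eqref{eq:bias-bound-summary} into the explicit scalar~$\varphi(f_{0}^{2}(\alpha))$, making essential use of~$\varphi\prec\Theta^{2}$. By spectral calculus this norm equals
$$
\sup_{t>0}\lr{\frac{\alpha}{\alpha+t}\,\varphi\lr{f_{0}^{2}(t)}}.
$$
Setting~$u=f_{0}^{2}(t)$, so that~$t=\Theta^{2}(u)$, the assumption~$\varphi\prec\Theta^{2}$ forces~$u\mapsto\varphi(u)/\Theta^{2}(u)$ to be non-increasing, hence so is~$t\mapsto\varphi(f_{0}^{2}(t))/t$. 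Splitting the supremum at~$t=\alpha$ and using~$\funf(t)\leq 1$ for~$t\leq\alpha$ together with the elementary~$\funf(t)\,t\leq\alpha$ for~$t\geq\alpha$, both branches are majorised by~$\varphi(f_{0}^{2}(\alpha))$. This is precisely the saturation observation opening Section~\ref{sec:saturation}.

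Squaring the resulting bias bound, adding twice the spread bound from Proposition~\ref{pro:spread-bound}, factoring out the common constant~$M^{2}/m^{2}$ (the~$1/m^{2}$ prefactor of the spread is absorbed via~$M\geq 1$), and finally taking the infimum over the free regularisation parameter~$\alpha>0$ produces~\eqref{eq:spc-overall}. The only real obstacle is the monotonicity calculation in the middle paragraph; everything else amounts to direct bookkeeping of already proved ingredients.
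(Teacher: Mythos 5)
Your proof is correct and follows the same route the paper takes (the paper does not give a displayed proof, but the paragraph preceding the theorem assembles exactly these ingredients): fold $\Vad$ into twice the spread, use Proposition~\ref{pro:spread-bound}, and convert the operator-norm bias bound~\eqref{eq:bias-bound-summary} into the scalar $\varphi(f_0^2(\alpha))$ via the monotonicity of $t\mapsto\varphi(f_0^2(t))/t$ under $\varphi\prec\Theta^2$, which the paper compresses into the one-line remark $\sa(t)\,t\leq\alpha$. One small detail worth noting: carrying Proposition~\ref{pro:spread-bound} through faithfully, as you do, leaves a factor $\delta^2$ in front of the trace term, which appears to have been dropped in the paper's display~\eqref{eq:spc-overall}.
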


The above analysis is given in abstract terms of index functions, and
it is worthwhile to give an example to compare this with
known (and minimax)  bounds for the commuting case.

To this end we treat the case for a moderately ill-posed
operator~$\prc$, a power type link, and Sobolev type smoothness, with
parameters~$a,p>0$ and~$\beta>0$ as in the original
studies~\cite{MR2906881,AM2014}. 
\begin{xmpl}[power type decay]\label{ass:xmpl}\ \\
  \begin{enumerate}
  \item  For some~$a>0$ we have
    that~$s_{j}(\prc) \asymp j^{-(1+2a)},\ j=1,2,\dots$.
  \item There is some $p>0$ such that~$\Theta^{2}(t) \asymp t^{\frac{1
        + 2a + 2p}{1 + 2a}}$ as~$t\to 0$, and\label{ali:theta2}
  \item There is some~$R<\infty$ such that~$\sum_{j=^{\infty}}
    j^{2\beta}\lr{\xast_{j}}^{2}\leq R^{2}$, where
    ~$\xast_{j},\ j=1,2,\dots$ denote the coefficients
    of~$\xast$ with respect to the eigenbasis of~$\prc$.\label{ali:smooth}
  \end{enumerate}  
\end{xmpl}
This gives for the compound operator~$\bastb$ that
$$
s_{j}(\bastb) \asymp s_{j}\lr{\Theta^{2}(\prc)} =
\Theta^{2}(s_{j}(\prc))\asymp j^{-(1 + 2a + 2p)},\quad j=1,2,\dots
$$
Notice furthermore that
$$
s_{j}\lr{f_{0}^{2}(\bastb)} = f_{0}^{2}\lr{s_{j}(\bastb)} \asymp
j^{-(1 + 2a)},\quad j=1,2\dots
$$
Then we can bound, and we omit the standard calculations,  the posterior spread by using
Proposition~\ref{pro:spread-bound} as
$$
\tr{\Cad} \leq \delta^{2}\sum_{j=1}^{\infty}
\frac{s_{j}(f_{0}^{2}(\bastb)}{\alpha + s_{j}(\bastb)}
\asymp \delta^{2}\alpha^{-\frac{1 + 2p}{1 + 2a + 2p}}.
$$
We turn to the description of the smoothness of~$\xast$ in terms of an
index function~$\varphi$, thus rewriting the \peter{condition (\ref{ali:smooth})}. This yields
that~$\varphi(t) = t^{\frac{\beta}{1 + 2a}}$, see Section~4
from~\cite{AM2014} for details. 
We see from \peter{condition~(\ref{ali:theta2})} that saturation is at~$\beta= 1 + 2a + 2p$.

Thus for~$0 < \beta \leq 1 + 2a + 2p $ we apply
the bias bound from~(\ref{eq:bias-bound-summary}) for obtaining a tight bound for
the~$\spc$. We balance the squared bias with the bound for the
posterior spread as~$\alpha^{\frac{2\beta}{1 + 2a + 2p}}=
\delta^{2}\alpha^{-\frac{1 + 2p}{1 + 2a + 2p}}$. 

This gives~$\aast =
\left[\delta^{2}\right]^{\frac{\beta}{1 + 2\beta + 2p}}$, and finally
this results in rate for the decay of the~$\spc$ as
$$
\spc(\aast(\delta),\delta) = \bigo
\lr{\left[\delta^{2}\right]^{\frac{2\beta}{1 + 2\beta + 2p}}} \quad
\text{as}\  \delta\to 0
$$
if~$\beta \leq 1 + 2a + 2p$. Such bound is well known for commuting
operators, see~\cite[\S~4.1]{AM2014}, and the original
study~\cite[Thm.~4.1]{MR2906881}. 

Next we sketch the way to obtain bounds for the backwards heat
equation, with an  exponentially ill-posed operator.
\begin{xmpl}
  [backwards heat equation, {cf.~\cite{MR3031282}}]\ \\
  \begin{enumerate}
  \item  For some~$a>0$ we have
    that~$s_{j}(\prc) \asymp j^{-(1+2a)},\ j=1,2,\dots$.
  \item The linking function~$\Theta^{2}$ obeys~$\Theta^{2}(t) = e^{-2
      t^{\frac{2}{1 + a}}}$.
  \item There is some~$\beta >0$ such that~$\varphi(t) = t^{\beta/(1 +
      2a)},\ t>0$.
  \end{enumerate}
First, the smoothness assumption is as in the  previous example.
In this case always~$\varphi \prec \Theta$, such that
 there is no saturation.

For bounding the bias we see that~$f_{0}^{2}(t) = \frac 1 2 \log^{-(1 +
  2a)}(1/t),\ t<1$. For~$\beta/(1+ 2a) \leq 1/2$ the position will
thus be
$$
t \longrightarrow \left[\frac 1 2\log^{-(1+ 2 a)}(1/t) \right]^{2\beta/(1  + 2a)}
= 4^{- \beta/(1  + 2a)} \log^{-2\beta}(1/t),\quad \text{as} \ t\to 0,
$$
and this is operator concave for~$0 < \beta \leq 1/2$. In the regular
case, a similar calculation reveals that the function
$$
t \longrightarrow 4^{(\beta-1/2)} \log^{-2\beta + 1}(1/t),
$$
must be operator concave, which is true for~$1/2 \leq \beta \leq 1$.

So, in the range~$0 < \beta \leq 1$  we find that
$$
\brf= \bigo\lr{\log^{-\beta/2}(1/\alpha)}\quad \text{as}\ \alpha\to 0.
$$
By standard calculations we bound the posterior spread as
$$
\tr{\Cad} \leq C \delta^{2}\frac 1 \alpha\log^{-a}(1/\alpha),
$$
for some constant~$C<\infty$. Applying Theorem~\ref{thm:spc} we
let~$\aast(\delta) := \delta^{2} \log^{\beta - a}(1/\alpha)$ and get the rate
$$
\spc(\aast(\delta),\delta) = \bigo\lr{\log^{-\beta}(1/\delta)}\quad
\text{as}\ \delta\to 0.
$$
This corresponds to the contraction rate of the posterior as presented
in~\cite{MR3031282} with~$\delta\sim n^{-1/2}$. For details we refer
to Section~4 of the survey~\cite{AM2014}. However, while these results
cover all~$\beta >0$, the non-commuting case will cover only the
range~$0 < \beta \leq 1$.
\end{xmpl}

\section{Conclusion}
\label{sec:conclusion}

We summarize the above findings, and we start with the bias bounds.  In either of the three cases, if
there is a valid link condition, if smoothness is given as in
Assumption~\ref{ass:smoothness}, and if the involved functions are
operator concave, then the norm in~$\brf$ from~(\ref{eq:bias-form})
can be bounded by 
$$
\brf \leq \frac M m \norm{\funf(\bastb)\varphi(f_{0}^{2}(\bastb))}{}.
$$
This seems to be the natural extension for the bias bound to
the non-commuting context. In the commuting context we would get
$f_{0}^{2}(\bastb) = \prc$, see
\S~\ref{sec:commute}.

Under these premises the analysis from~\cite{AM2014} can be extended to
the non-commuting situation. We stressed in
Remark~\ref{rem:saturation} that a lifting of the original link
condition is necessary in order to yield optimal order bounds for the
squared posterior contraction up to the saturation point.

The analysis from~\cite{MR3084161} covers
by different techniques the regular case. In case of a power type
function~$\psi$, and hence of~$\Theta$, the requirements of operator concavity reduce to
power type functions with power in the range between $(0,1)$, and
hence these are automatically fulfilled.

For the posterior spread we derived a similar extension to the
non-commuting case in Proposition~\ref{pro:spread-bound}. There is no
handy way to derive the exact increase of the spread as~$\alpha\to
0$. Under additional assumption on the regularity of the decay for the
singular numbers of~$\bastb$ this problem can be reduced to the
\emph{effective dimension} of the operator~$\bastb$, given
as~$\N_{\bastb}(\alpha) = \tr{\lr{\alpha\op I + \bastb}^{-1}\bastb}$. We
did not pursue this line, here. Instead we refer to the
study~\cite{MR3406424}.

Finally, we presented two examples exhibiting the obtained rates for
the $\spc$, both for moderately and severely ill-posed operators. More
examples, using functional dependence for commuting operators,  are given in the study~\cite{AM2014}.

\bibliographystyle{plain}
\bibliography{iterate}

\begin{thebibliography}{10}

\bibitem{MR3084161}
Sergios Agapiou, Stig Larsson, and Andrew~M. Stuart.
\newblock Posterior contraction rates for the {B}ayesian approach to linear
  ill-posed inverse problems.
\newblock {\em Stochastic Process. Appl.}, 123(10):3828--3860, 2013.

\bibitem{AM2014}
Sergios Agapiou and Peter Math{\'e}.
\newblock Posterior contraction in {B}ayesian inverse problems under {G}aussian
  priors.
\newblock In {\em New Trends in {P}arameter {I}dentification for {M}athematical
  {M}odels}. Springer, 2018.

\bibitem{MR1477662}
Rajendra Bhatia.
\newblock {\em Matrix analysis}, volume 169 of {\em Graduate Texts in
  Mathematics}.
\newblock Springer-Verlag, New York, 1997.

\bibitem{MR2992966}
G.~Blanchard and P.~Math{\'e}.
\newblock Discrepancy principle for statistical inverse problems with
  application to conjugate gradient iteration.
\newblock {\em Inverse Problems}, 28(11):115011, 23, 2012.

\bibitem{MR2213075}
Albrecht B{\"o}ttcher, Bernd Hofmann, Ulrich Tautenhahn, and Masahiro Yamamoto.
\newblock Convergence rates for {T}ikhonov regularization from different kinds
  of smoothness conditions.
\newblock {\em Appl. Anal.}, 85(5):555--578, 2006.

\bibitem{MR0203464}
R.~G. Douglas.
\newblock On majorization, factorization, and range inclusion of operators on
  {H}ilbert space.
\newblock {\em Proc. Amer. Math. Soc.}, 17:413--415, 1966.

\bibitem{MR1408680}
Heinz~W. Engl, Martin Hanke, and Andreas Neubauer.
\newblock {\em Regularization of inverse problems}, volume 375 of {\em
  Mathematics and its Applications}.
\newblock Kluwer Academic Publishers Group, Dordrecht, 1996.

\bibitem{MR2906881}
B.~T. Knapik, A.~W. van~der Vaart, and J.~H. van Zanten.
\newblock Bayesian inverse problems with {G}aussian priors.
\newblock {\em Ann. Statist.}, 39(5):2626--2657, 2011.

\bibitem{MR3031282}
B.~T. Knapik, A.~W. van~der Vaart, and J.~H. van Zanten.
\newblock Bayesian recovery of the initial condition for the heat equation.
\newblock {\em Comm. Statist. Theory Methods}, 42(7):1294--1313, 2013.

\bibitem{MR649411}
S.~G. Kre\u{\i}n, Yu.~\={I}. Petun\={i}n, and E.~M. Sem\"{e}nov.
\newblock {\em Interpolation of linear operators}, volume~54 of {\em
  Translations of Mathematical Monographs}.
\newblock American Mathematical Society, Providence, R.I., 1982.
\newblock Translated from the Russian by J. Sz\H{u}cs.

\bibitem{MR3406424}
Kui Lin, Shuai Lu, and Peter Math\'e.
\newblock Oracle-type posterior contraction rates in {B}ayesian inverse
  problems.
\newblock {\em Inverse Probl. Imaging}, 9(3):895--915, 2015.

\bibitem{MR2277542}
Peter Math{\'e} and Ulrich Tautenhahn.
\newblock Interpolation in variable {H}ilbert scales with application to
  inverse problems.
\newblock {\em Inverse Problems}, 22(6):2271--2297, 2006.

\bibitem{MR0519587}
Frank Natterer.
\newblock The finite element method for ill-posed problems.
\newblock {\em RAIRO Anal. Num\'{e}r.}, 11(3):271--278, 1977.

\bibitem{MR0488721}
Frank Natterer.
\newblock Regularisierung schlecht gestellter {P}robleme durch
  {P}rojektionsverfahren.
\newblock {\em Numer. Math.}, 28(3):329--341, 1977.

\end{thebibliography}


\providecommand{\bysame}{\leavevmode\hbox to3em{\hrulefill}\thinspace}
\providecommand{\MR}{\relax\ifhmode\unskip\space\fi MR }
\providecommand{\MRhref}[2]{%
  \href{http://www.ams.org/mathscinet-getitem?mr=#1}{#2}
}
\providecommand{\href}[2]{#2}
\begin{thebibliography}{10}

\bibitem{MR3084161}
Sergios Agapiou, Stig Larsson, and Andrew~M. Stuart, \emph{Posterior
  contraction rates for the {B}ayesian approach to linear ill-posed inverse
  problems}, Stochastic Process. Appl. \textbf{123} (2013), no.~10, 3828--3860.
  \MR{3084161}

\bibitem{AM2014}
Sergios Agapiou and Peter Math{\'e}, \emph{Posterior contraction in {B}ayesian
  inverse problems under {G}aussian priors}, New Trends in {P}arameter
  {I}dentification for {M}athematical {M}odels, Springer, 2018.

\bibitem{MR1477662}
Rajendra Bhatia, \emph{Matrix analysis}, Graduate Texts in Mathematics, vol.
  169, Springer-Verlag, New York, 1997. \MR{1477662 (98i:15003)}

\bibitem{MR2992966}
G.~Blanchard and P.~Math{\'e}, \emph{Discrepancy principle for statistical
  inverse problems with application to conjugate gradient iteration}, Inverse
  Problems \textbf{28} (2012), no.~11, 115011, 23. \MR{2992966}

\bibitem{MR2213075}
Albrecht B{\"o}ttcher, Bernd Hofmann, Ulrich Tautenhahn, and Masahiro Yamamoto,
  \emph{Convergence rates for {T}ikhonov regularization from different kinds of
  smoothness conditions}, Appl. Anal. \textbf{85} (2006), no.~5, 555--578.
  \MR{2213075 (2006k:65150)}

\bibitem{MR0203464}
R.~G. Douglas, \emph{On majorization, factorization, and range inclusion of
  operators on {H}ilbert space}, Proc. Amer. Math. Soc. \textbf{17} (1966),
  413--415. \MR{0203464}

\bibitem{MR1408680}
Heinz~W. Engl, Martin Hanke, and Andreas Neubauer, \emph{Regularization of
  inverse problems}, Mathematics and its Applications, vol. 375, Kluwer
  Academic Publishers Group, Dordrecht, 1996. \MR{1408680 (97k:65145)}

\bibitem{MR2906881}
B.~T. Knapik, A.~W. van~der Vaart, and J.~H. van Zanten, \emph{Bayesian inverse
  problems with {G}aussian priors}, Ann. Statist. \textbf{39} (2011), no.~5,
  2626--2657. \MR{2906881}

\bibitem{MR3031282}
\bysame, \emph{Bayesian recovery of the initial condition for the heat
  equation}, Comm. Statist. Theory Methods \textbf{42} (2013), no.~7,
  1294--1313. \MR{3031282}

\bibitem{MR3406424}
Kui Lin, Shuai Lu, and Peter Math\'e, \emph{Oracle-type posterior contraction
  rates in {B}ayesian inverse problems}, Inverse Probl. Imaging \textbf{9}
  (2015), no.~3, 895--915. \MR{3406424}

\bibitem{MR2277542}
Peter Math{\'e} and Ulrich Tautenhahn, \emph{Interpolation in variable
  {H}ilbert scales with application to inverse problems}, Inverse Problems
  \textbf{22} (2006), no.~6, 2271--2297. \MR{2277542 (2008c:47028)}

\bibitem{Natterer84}
F.~Natterer, \emph{Error bounds for {T}ikhonov regularization in {H}ilbert
  scales}, Applicable Anal. \textbf{18} (1984), no.~1-2, 29--37.

\end{thebibliography}

\end{document}